\documentclass[12pt, leqno]{amsart}
\usepackage[OT2,T1]{fontenc} 
\DeclareSymbolFont{cyrletters}{OT2}{wncyr}{m}{n}
\DeclareMathSymbol{\Sha}{\mathalpha}{cyrletters}{"58}

\usepackage{indentfirst}
\usepackage{amstext}
\usepackage{amsopn}
\usepackage{amsfonts}
\usepackage{amsmath}
\usepackage{latexsym}
\usepackage{amscd}
\usepackage{amssymb}
\usepackage{amsmath}
\usepackage[all,cmtip]{xy}
\usepackage{leftidx}
\usepackage{graphicx}
\usepackage{tikz}
\usepackage{ulem}
\usepackage{hyperref}

\textwidth      =6in \oddsidemargin  =.25in \evensidemargin
=\oddsidemargin \font\teneufm=eufm10 \font\seveneufm=eufm7
\font\fiveeufm=eufm5
\newfam\eufmfam
\textfont\eufmfam=\teneufm \scriptfont\eufmfam=\seveneufm
\scriptscriptfont\eufmfam=\fiveeufm


\let\goth\mathfrak

\def\cB{\mathcal B}

\def\cO{\mathcal O}

\def\cE{\mathcal E}

\def\cP{\mathcal P}
\def\cp{\mathfrak{p}}
\def\cU{\mathcal U}

\def\GG{\mathbb{G}}

\def\WW{\mathbf{W}}
\def\VV{\mathbf{V}}

\def\gB{\goth B}
\def\gG{\goth G}
\def\gH{\goth H}

\def\gT{\goth T}

\def\gX{\goth X}
\def\gZ{\goth Z}

\def\gp{\goth p}

\def\1{\mbox{\bf 1}}

%

 \DeclareMathOperator{\Hom}{Hom}
\DeclareMathOperator{\Aut}{Aut}

\DeclareMathOperator{\uF}{\underline{F}}
\DeclareMathOperator{\uG}{\underline{G}}

\DeclareMathOperator{\uP}{\underline{P}}
\DeclareMathOperator{\uU}{\underline{U}}
\DeclareMathOperator{\uZ}{\underline{Z}}

\DeclareMathOperator{\Out}{Out}

\DeclareMathOperator{\GL}{\rm GL}

\newcommand{\incl}[1][r]
{\ar@<-0.2pc>@{^(-}[#1] \ar@<+0.2pc>@{-}[#1]}









\newtheorem{stheorem}{Theorem}[section]
\newtheorem{sclaim}[stheorem]{Claim}

\newtheorem{scorollary}[stheorem]{Corollary}
\newtheorem{slemma}[stheorem]{Lemma}
\newtheorem{sproposition}[stheorem]{Proposition}
\newtheorem{sremark}[stheorem]{Remark}
\newtheorem{sremarks}[stheorem]{Remarks}
\newtheorem{sexample}[stheorem]{Example}

\newtheorem{ssetting}[stheorem]{Setting}



\theoremstyle{definition}

\numberwithin{equation}{section}


\def\ZZ{\mathbb{Z}}

\def\PP{\mathbb{P}}

\def\gE{\mathfrak{E}}

\def\gG{\mathfrak{G}}

\def\gQ{\mathfrak{Q}}

\def\gU{\mathfrak{U}}
\def\Par{\mathrm{Par}}

\def\cO{\mathcal{O}}

\def\ol{\overline}

\def\fppf{\text{\rm fppf}}

\def\Lie{\mathop{\rm Lie}\nolimits}

\def\2int{\mathop{2\int}\nolimits}

\def\Spec{\mathop{\rm Spec}\nolimits}

\def\Lie{\mathop{\rm Lie}\nolimits}

\def\Hom{\mathop{\rm Hom}\nolimits}

\def\Stab{\mathop{\rm Stab}\nolimits}

\def\Gal{\mathop{\rm Gal}\nolimits}

\def\Aut{\text{\rm{Aut}}}
\def\Out{\text{\rm{Out}}}
\def\sm{\smallskip}

\def\Par{\text{\rm{Par}}}

\def\resp.{\mathop{\rm resp.}\nolimits}
\def\limproj{\mathop{\oalign{lim\cr
\hidewidth$\longleftarrow$\hidewidth\cr}}}

\def\lgr{\longrightarrow}

\font\math=cmmi10
\def\varpi{\hbox{\math\char'44}}

\def\simlgr{\buildrel\sim\over\lgr}

\def\pa{\S\kern.15em }

\def\un{\uppercase\expandafter{\romannumeral 1}}
\def\deux{\uppercase\expandafter{\romannumeral 2}}
\def\trois{\uppercase\expandafter{\romannumeral 3}}
\def\quatre{\uppercase\expandafter{\romannumeral 4}}
\def\cinq{\uppercase\expandafter{\romannumeral 5}}
\def\six{\uppercase\expandafter{\romannumeral 6}}

\def\hfl#1#2#3{\smash{\mathop{\hbox to#3{\rightarrowfill}}\limits
^{\scriptstyle#1}_{\scriptstyle#2}}}
\def\gfl#1#2#3{\smash{\mathop{\hbox to#3{\leftarrowfill}}\limits
^{\scriptstyle#1}_{\scriptstyle#2}}}

\title[Local-global principle]{A local-global principle for twisted flag varieties }

\author{P. Gille}\address{UMR 5208
Institut Camille Jordan - Universit\'e Claude Bernard Lyon 1
43 boulevard du 11 novembre 1918
69622 Villeurbanne cedex - France 
}

\email{gille@math.univ-lyon1.fr}

\author{R. Parimala}\address{Departement  of Mathematics and Computer Science,
MSC W401, 400 Dowman Dr. Emory University
Atlanta, GA 30322
USA  
}
\email{parimala.raman@emory.edu}


\date{\today}

\begin{document}

 \begin{abstract} We prove a local-global principle
for twisted flag varieties over a semiglobal field. 
  
\smallskip

\noindent {\em Keywords:} Local-global principle,
curves over local fields, homogeneous varieties, reductive groups.  \\

\noindent {\em MSC 2000:} 11G99, 14G99, 14G05, 11E72, 11E12, 20G35
\end{abstract}

\maketitle


\bigskip

\section{Introduction}\label{section_intro}

Let $T$ be a complete discrete valuation ring with fraction field $K$
and residue field $k$. Let $X$ be a smooth, projective, geometrically 
integral curve over $K$. Let $F=K(X)$ be the function field of $X$ and let $t$ be an uniformizing parameter of $T$.
We prove the following theorem which settles a conjecture of Colliot-Th\'el\`ene, Suresh and the second author for function fields of $p$-adic curves \cite[conjecture 1]{CTPS}, in the  very general context of semiglobal fields.

 \begin{stheorem} \label{thm_patching0}
 (see Th. \ref{thm_patching}) Let $G$ be a reductive 
 $F$--group and assume that $p$ does not divide the order of the automorphism group of the absolute root system of 
 $G_{ad}$.
 Let $Z$ be a twisted flag $F$--variety of $G$.
 Then $Z(F) \not = \emptyset$ if and only if $Z(F_v) \not = \emptyset$
 for all discrete valuations of $F$ arising from models of $X$.
 \end{stheorem}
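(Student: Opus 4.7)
My proof plan follows the now-standard two-stage strategy for semiglobal fields: a global patching reduction to local problems on a regular model, followed by a delicate analysis at the closed points of the special fibre.

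To begin, I would fix a regular projective model $\mathcal{X}/T$ of $X$ whose special fibre $Y$ has strict normal crossings, arranged so that $G$ (and hence $Z$) extends over a dense open of $\mathcal{X}$. The first ingredient is the Harbater--Hartmann--Krashen patching local--global principle for projective homogeneous varieties: it reduces $Z(F)\neq\emptyset$ to $Z(F_\xi)\neq\emptyset$ for $\xi$ ranging over the generic points of components of $Y$ together with the closed points of $Y$. At a generic point $\eta$ of a component, the patch field $F_\eta$ coincides with the completion $F_{v_\eta}$ of $F$ at the divisorial valuation defined by $\eta$, so the assumption of the theorem immediately yields $Z(F_\eta)\neq\emptyset$.

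The essential case is therefore that of a closed point $P\in Y$, where $F_P=\operatorname{Frac}(\widehat{\mathcal{O}}_{\mathcal{X},P})$ is the fraction field of a two-dimensional complete regular local ring and is \emph{not} a completion of $F$ at any divisorial valuation of $F$. However, the divisorial valuations on $F_P$---those defined by height-one primes of $\widehat{\mathcal{O}}_{\mathcal{X},P}$, and more generally those produced by iterated blow-ups of $\mathcal{X}$ centred at $P$---are all restrictions of divisorial valuations of $F$ coming from (blown-up) models of $X$. The hypothesis therefore yields $Z$-points over the completion of $F_P$ at every divisorial valuation. Since isotropy of a twisted flag variety is equivalent to the existence of a parabolic subgroup of $G_{F_P}$ of the prescribed type, the problem reduces to a local--global principle for Tits indices of reductive groups over two-dimensional complete local fields, with respect to divisorial valuations.

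This last reduction is the main obstacle, and the hypothesis $p\nmid|\Aut(\Phi)|$ (with $\Phi$ the absolute root system of $G_{\mathrm{ad}}$) is precisely what makes it accessible. I would use Bruhat--Tits theory, together with the tameness provided by the hypothesis, to obtain---possibly after a tamely ramified base change---a reductive integral model of $G$ over $\widehat{\mathcal{O}}_{\mathcal{X},P}$; one then applies a Hensel-type argument combined with a local--global principle over the underlying two-dimensional henselian pair, in the spirit of results of Hu and of Colliot-Th\'el\`ene--Parimala--Suresh, to lift the local parabolics to a parabolic subgroup defined over $F_P$. The delicate point is that the residue characteristic is $p$, so standard tame-cohomology arguments must be handled with care: the root-system hypothesis is exactly what forces the relevant Galois cohomology obstructions to be $\ell$-primary for primes $\ell\neq p$, and hence to be detected by the divisorial completions.
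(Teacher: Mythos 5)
Your two--stage skeleton (patching reduction to the points of the special fibre of a regular model, then a local analysis at the closed points) is the same as the paper's, but the heart of the second stage is missing. At a closed point $P$ the problem is indeed a local--global principle for the Tits index of $G_{F_P}$, where $F_P=\mathrm{Frac}(\widehat R_P)$ and $\widehat R_P$ is a two--dimensional complete regular local ring; however, the mechanism you propose --- Bruhat--Tits theory producing a reductive integral model after a tame base change, followed by a Hensel--type argument ``in the spirit of Hu and Colliot-Th\'el\`ene--Parimala--Suresh'' --- is not available in this generality and is not how the hypothesis on $p$ enters. Bruhat--Tits theory lives over one--dimensional (complete discretely valued) fields and does not furnish reductive models over $\widehat R_P$ or over its localization away from the normal crossing divisor. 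What the paper actually does is: reduce to $G$ adjoint, write $Z$ as a form of $\Par_I(G_0)$ twisted by a torsor under $\Aut_I(G_0)$, invoke Chernousov--Gille--Reichstein \cite{CGR} to reduce that torsor to a finite group scheme $S_0$ which is \'etale of order prime to $p$ precisely because $p$ does not divide $|\Aut(\Phi)|$, spread everything out over $\gX\setminus D$ using purity (Lemma \ref{lem_extend}), and conclude that the resulting group scheme is a \emph{loop} reductive group scheme over the local ring at $P$ punctured along $D$. The decisive input is then \cite[th.~4.1]{Gi2}: for such a loop group scheme, isotropy of type $I$ over the single completion $F_{P,v}$ at the blow-up valuation already forces a parabolic of type $I$ over $F_P$. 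Nothing in your sketch substitutes for this theorem; the remark that the obstructions are ``$\ell$-primary for $\ell\neq p$'' does not by itself explain why divisorial completions detect them.

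There is also a gap in your first stage. The Harbater--Hartmann--Krashen local--global principle for projective homogeneous varieties is proved under rationality (or retract rationality) hypotheses on the group, which are needed for the factorization $G(F_0)=G(F_1)\,G(F_2)$; for an arbitrary reductive $F$--group in arbitrary residue characteristic this factorization is not known. Sections 2--4 of the paper exist precisely to supply the required patching statement (Theorem \ref{thm_key} and Corollary \ref{cor_main_hhk}) by replacing $G(F_i)$ with the Kneser--Tits subgroups $G(F_i)^+$, controlling them via $R$-equivalence over Artinian algebras, proving the factorization $G(F_0)^+=G(F_1)^+\,G(F_2)^+$ by a Banach--space implicit function theorem, and reducing to $\PP^1_T$ by Weil restriction. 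So the patching input cannot be taken off the shelf either; if you cite \cite{HHK} or \cite{HHK2} here you must verify their rationality hypotheses, which fail in general.
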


The result was known for the cases of smooth projective
quadrics and Severi-Brauer varieties \cite[th. 3.1]{CTPS}
and also for generalized Severi-Brauer varieties (Reddy-Suresh \cite[th. 2]{RS}).

If we use all rank one valuations, the
result holds unconditionnally (Cor. \ref{cor_main_hhk3}).
On the other hand, if $G$ extends to a reductive group 
scheme over a smooth model $\gX$ of  $X$, 
the result is unconditional (Cor. \ref{cor_main_hhk2}).
 
 \smallskip
 
Let us review the contents.
Given a henselian couple $(A,I)$ and 
a reductive $A$-group scheme $H$,
section 2 deals with generation of groups $H(A)$ by 
unipotent elements and also with the quotient 
of $H(A)/RH(A)$ by the (normal) subgroup of $R$-trivial elements
defined in \cite{GS}. Section 3 is devoted to mild improvements
of patching techniques of Harbater-Hartmann-Krashen 
involving more analytical functions and leads to various
group decompositions (prop. \ref{prop_R_eq});
this permits to obtain a patching theorem
for twisted flag varieties (th. \ref{thm_key}).
Section 4  deals with the setting
of the beginning of the introduction and 
provides a weaker version of  the main result.
What remains to do is mostly a local study  for
a two dimensional complete regular local ring $R$ with parameters $t,s$
and torsors over its localization $R[t^{-1},s^{-1}]$.
This is achieved in  section 5 by making use of the 
loop torsors over $R[t^{-1},s^{-1}]$ and of the results of 
\cite{Gi2}.

\medskip

\noindent{\bf Acknowledgements.} 
We thank  
Venapally Suresh for a careful reading of the manuscript
and fruitful suggestions. We thank 
  David Harbater for valuable comments on a preliminary version of the paper.  
 Finally we  thank Gabriel  Dospinescu
for telling us about Schneider's book.
 \bigskip  
 
 \smallskip

 \noindent{\bf Conventions and notation.}
$(a)$ A variety $V$ over a field $F$ means
a separated $F$--scheme of finite type which is 
 integral \cite[Tag 020D]{St}.
 
 \sm
 
\noindent $(b)$ Let $G$ be a reductive $F$--algebraic group and let $F_s$ be a separable closure of $F$.
Let $(B,T)$ be a Killing couple of $G_{k_s}$ and denote
$\Delta(G_{F_s})= \Delta(G_{F_s},B,T)$  the associated Dynkin diagram
(there is a canonical bijection $\Delta(G_{F_s},B,T) \simlgr \Delta(G_{F_s},B',T')$
for another choice of Killing couple).
This diagram 
$\Delta(G_{F_s})$  is equipped 
 with the star action of $\Gal(F_s/F)$ \cite[\S 6.4]{BoT65}.
 We recall that there is an order preserving bijection $I \to P_I$ between
 the subsets of $\Delta(G_{F_s})$ and the $F_s$--parabolic subgroups of 
 $G_{F_s}$ containing $B$
 (or equivalently with the $G(F_s)$--conjugacy classes of parabolic subgroups of $G_{F_s}$).
 Since minimal $F$--parabolic subgroups are $G(F)$-conjugated 
 we denote by  $\Delta_0(G) \subset \Delta(G_{F_s})$ be conjugacy class of 
 $P_0 \times_F F_s$ where $P_0$ is a minimal $F$--parabolic subgroup of $G$ (it is stable under the star action).
 The triple $(\Delta(G_{F_s}), \hbox{star action}, \Delta_0(G))$ is called 
 the Tits index of $G$. By abuse of notation, $\Delta_0(G)$ alone is called the Tits index. 
 By a twisted $F$--flag variety of $G$, we mean 
 a $G$-variety $X$ which becomes isomorphic over $F_s$
 to some $G_{F_s}/P_I$ as $G_{F_s}$-variety 
 (such an $I$ is unique).
According to \cite[prop. 1.3]{MPW},  
$I$ is invariant under the star action and 
$X$ is $G$-isomorphic to the variety of $F$--parabolic
subgroups of type $I$ which is denoted in this paper
by $\Par_I(G)$. 
 
 Those  varieties are called sometimes projective 
 homogeneous $G$--varieties (as in \cite{CTPS})
 but we warn the reader of the danger of this terminology
 since there exist in positive characteristic
 $F$--subgroups $Q$ which are not smooth such that 
 $G/Q$ is a projective $F$--variety \cite{W}. 
 In the appendix \ref{app_parabolic}, we show that 
 a homogeneous $G$--variety $X$ is a variety of parabolic subgroups
 if and only if the action is transitive in the sense
 that $G(E)$ acts transitively on $X(E)$ for each $F$ field $E$ (Prop. \ref{prop_homog}).

 \sm
 
\noindent $(c)$ 
We use mainly  the terminology and notation of Grothendieck-Dieudonn\'e \cite[\S 9.4  and 9.6]{EGA1}
 which agrees with that  of Demazure-Grothendieck used in \cite[Exp. I.4]{SGA3}. Let $S$ be a scheme and let $\cE$ be a quasi-coherent sheaf over $S$.
 For each morphism  $f:T \to S$, 
we denote by $\cE_{T}=f^*(\cE)$ the inverse image of $\cE$ 
by the morphism $f$.
 We denote by $\VV(\cE)$ the affine $S$--scheme defined by 
$\VV(\cE)=\Spec\bigl( \mathrm{Sym}^\bullet(\cE)\bigr)$;
it is affine  over $S$ and 
represents the $S$--functor $Y \mapsto \Hom_{\cO_Y}(\cE_{Y}, \cO_Y)$  \cite[9.4.9]{EGA1}. 
  We assume now that $\cE$ is locally free of finite rank and denote by $\cE^\vee$ its dual.
In this case the affine $S$--scheme $\VV(\cE)$ is  of finite presentation
(ibid, 9.4.11); also
the $S$--functor $Y \mapsto H^0(Y, \cE_{Y})= 
\Hom_{\cO_Y}(\cO_Y, \cE_{Y} )$ 
is representable by the  affine $S$--scheme $\VV(\cE^\vee)$
which is also denoted by  $\WW(\cE)$  \cite[I.4.6]{SGA3}.

 \section{Around Kneser-Tits' problem} 
 Let $A$ be a semilocal ring, $I$ its radical. 
 
 \subsection{Subgroups of elementary elements}\label{subsec_PS}
 Let  $H$ be a reductive $A$-group scheme which   is strictly $A$--isotropic, that is, 
 all factors of $H_{ad}$ are isotropic.
Let  $H$ be a strictly proper parabolic $A$--subgroup of $H$ and let $P^{-}$ be 
a $A$--parabolic subgroup opposite to $P$. 
The subgroup  of $H(A)$ generated by $R_u(P)(A)$ and $R_u(P^-)(A)$ does not depend 
of the choice of $P$ and $P^{-}$, this is a normal subgroup of $H(A)$ 
which is denoted by $H(A)^+$ \cite[Thm. 1  and comments]{PS}.

\begin{slemma}\label{lem_GS0}
We have  $H(A)=H(A)^+ \,  P(A)$. 
\end{slemma}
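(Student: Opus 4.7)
The plan is to reduce to a minimal strictly proper parabolic and then exploit the relative Bruhat decomposition for isotropic reductive group schemes over semilocal rings, in the spirit of \cite{PS}.

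First I would observe that since $P$ contains a strictly proper minimal $A$-parabolic $P_0$, and $P_0(A) \subset P(A)$, it suffices to establish the sharper identity $H(A) = H(A)^+ \cdot P_0(A)$. By the structure theory of reductive group schemes over a semilocal base, there exists a maximal $A$-split torus $S \subset L_0 := P_0 \cap P_0^-$, giving rise to a relative root system $\Phi = \Phi(H,S)$ with root subgroups $\{U_\alpha\}_{\alpha \in \Phi}$. The multiplication map $R_u(P_0^-) \times_A L_0 \times_A R_u(P_0) \to H$ is an open immersion onto the big cell, and any $A$-point of this cell factors as $u^- \ell u$ with $u^-, u \in H(A)^+$ and $\ell \in L_0(A) \subset P_0(A)$, hence already lies in $H(A)^+ \cdot P_0(A)$.

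To handle an arbitrary $g \in H(A)$, I would invoke the relative Bruhat-type decomposition
\[
H(A) \;=\; \bigcup_{w \in W(\Phi)} H(A)^+ \cdot \dot w \cdot P_0(A),
\]
where $\dot w \in N_H(S)(A)$ is a choice of representative of $w$. To absorb the Weyl representative into $H(A)^+$, note that any $w$ is a product of simple reflections $s_\alpha$, and each $s_\alpha$ admits a classical $\mathrm{SL}_2$-style lift of the form $u_\alpha(1) \cdot u_{-\alpha}(-1) \cdot u_\alpha(1) \in U_\alpha(A) \cdot U_{-\alpha}(A) \cdot U_\alpha(A)$. Since $U_\alpha(A) \subset H(A)^+$ for every non-divisible relative root $\alpha$ (using the independence of $H(A)^+$ of the choice of strictly proper parabolic established in \cite[Thm.~1]{PS}), such a representative lies in $H(A)^+$, and normality of $H(A)^+$ in $H(A)$ then yields $g \in H(A)^+ \cdot P_0(A) \subset H(A)^+ \cdot P(A)$.

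The main obstacle is the precise formulation of the relative Bruhat decomposition over the semilocal ring $A$: over a ring this is only a covering rather than a disjoint union, and one must lift each abstract Weyl element to a genuine $A$-point of $N_H(S)$. This is handled through the smoothness of $N_H(S) \to N_H(S)/Z_H(S)$ combined with the semilocal hypothesis on $A$, matching exactly the framework of Petrov--Stavrova which already underlies the definition of $H(A)^+$ recalled at the start of this section.
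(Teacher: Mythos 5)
The central step of your argument --- the covering $H(A)=\bigcup_{w} H(A)^{+}\,\dot w\,P_0(A)$ over the semilocal ring $A$ --- is precisely the gap. You correctly flag it as ``the main obstacle,'' but the device you offer (smoothness of $N_H(S)\to N_H(S)/Z_H(S)$ plus semilocality) only produces representatives $\dot w\in N_H(S)(A)$; it does not establish the covering itself. No such relative Bruhat decomposition over rings is available in \cite{PS}, whose Theorem~1 concerns only the independence of $H(A)^{+}$ from the choice of parabolic. Note that the naive Bruhat decomposition genuinely fails over a local ring: already for $\SL_2$ over a local ring with maximal ideal $\gm$, a matrix with lower-left entry in $\gm\setminus\{0\}$ lies in no cell $U_0(A)\dot w B(A)$. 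And once you replace $U_0(A)$ by $H(A)^{+}$ and absorb $\dot w$ into $H(A)^{+}$ (as you do in the last step), the asserted covering becomes literally equivalent to $H(A)=H(A)^{+}P_0(A)$, i.e.\ to the statement being proved; so as written the argument is circular at its key point.

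The correct input, and the one the paper uses, is the semilocal ``big cell'' statement of \cite[XXVI.5.2]{SGA3}: for the given parabolic $P$ with opposite $P^{-}$ one has $H(A)=R_u(P)(A)\,R_u(P^{-})(A)\,P(A)$ directly --- no Weyl elements and no reduction to a minimal parabolic are needed --- and since $R_u(P)(A)$ and $R_u(P^{-})(A)$ lie in $H(A)^{+}$ by definition, the lemma follows at once. Your preliminary reduction to a minimal $P_0$ and the discussion of the open immersion of the big cell are harmless but superfluous; if you substitute the SGA3 decomposition for the unproved Bruhat covering, your proof collapses to the paper's one-line argument.
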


\begin{proof} 
We put $U=R_u(P)$,  $U^{-}=R_u(P^{-})$.
 According to \cite[XXVI.5.2]{SGA3}, we have a decomposition 
 $H(A)  =U(A)  \, U^{-}(A) P(A)$ whence $H(A)  = H(A)^+ \, P(A)$. 
\end{proof}

A special  case of \cite[prop. 7.7]{GS} is the following.

\begin{slemma}\label{lem_GS}
Assume that $(A,I)$ is an henselian couple and that  $H$ is  semisimple simply connected.
Then the map  $H(A)/H(A)^+ \to H(A/I)/ H(A/I)^+$ is an isomorphism. 
\end{slemma}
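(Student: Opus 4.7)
The plan is to verify well-definedness (clear, since $R_u(P)(A)$ and $R_u(P^-)(A)$ map into their counterparts modulo $I$), then prove surjectivity and injectivity separately.

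For surjectivity, I would apply Lemma \ref{lem_GS0} over $A/I$ to obtain $H(A/I) = H(A/I)^+ \, P(A/I)$. Since $P$ is smooth as an $A$-group scheme, the henselian property of $(A,I)$ gives a surjection $P(A) \twoheadrightarrow P(A/I)$; hence every class in $H(A/I)/H(A/I)^+$ is represented by an element of $P(A) \subseteq H(A)$, settling surjectivity.

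For injectivity, fix $h \in H(A)$ whose image $\bar h$ lies in $H(A/I)^+$. Writing $\bar h = \bar u_1 \cdots \bar u_n$ with $\bar u_i \in R_u(P)(A/I) \cup R_u(P^-)(A/I)$, and lifting each factor to $u_i \in R_u(P)(A) \cup R_u(P^-)(A)$ via smoothness of the unipotent radicals and the henselian property, we obtain $h^+ := u_1 \cdots u_n \in H(A)^+$ with the same image $\bar h$. Then $h(h^+)^{-1}$ belongs to $K := \ker(H(A) \to H(A/I))$, and injectivity reduces to the inclusion $K \subseteq H(A)^+$. For this, I would exploit the big-cell open immersion $\Omega := R_u(P^-) \times L \times R_u(P) \hookrightarrow H$ containing the identity section, where $L$ denotes the Levi of $P$. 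For any $h \in K$, the image in $H(A/I)$ lies in $\Omega(A/I)$ (being trivial), and since $\Omega \subset H$ is open and $A$ is semilocal, the morphism $\Spec A \to H$ factors through $\Omega$. This yields a decomposition $h = u^- \ell u$ with $u^- \in R_u(P^-)(A) \cap K$, $u \in R_u(P)(A) \cap K$, and $\ell \in L(A) \cap K$. Since $u^\pm \in H(A)^+$ by definition, the task is reduced to showing $L(A) \cap K \subseteq H(A)^+$.

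This last inclusion is the main obstacle, and it is where the simply connected hypothesis enters essentially. Since $H$ is simply connected, the derived subgroup $L^{\mathrm{der}}$ is again semisimple simply connected, so I would argue by descending induction on the semisimple rank of the Levi, combined with the Chevalley commutator formulas: commutators $[u_\alpha(x), u_{-\alpha}(y)]$ for $\alpha$ a root of $R_u(P)$ produce elements of the coroot subtorus of $L$ lying in $H(A)^+$ by construction, and the henselian property (applied to the smooth commutator morphism near the identity) allows any $\ell \in L(A) \cap K$ to be assembled from such commutators together with elements handled by the inductive step applied to proper Levis of $L$. Simple-connectedness of $H$ guarantees that the coroots of roots in $R_u(P)$, together with $L^{\mathrm{der}}$, generate $L$, which closes the induction and establishes $\ell \in H(A)^+$.
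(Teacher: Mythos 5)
The paper does not prove this lemma at all: it is quoted as a special case of \cite[prop.\ 7.7]{GS}, so you are attempting a genuine proof where the authors supply only a citation. Your overall architecture is sound and matches the standard shape of such arguments: surjectivity via Lemma \ref{lem_GS0} over $A/I$ together with smoothness of $P$ and the lifting property of henselian couples; reduction of injectivity to the inclusion $K=\ker\bigl(H(A)\to H(A/I)\bigr)\subseteq H(A)^+$ by lifting a word in unipotents; and the big-cell factorization $h=u^-\ell u$ (valid because $I$ is the Jacobson radical, so an open subset of $\Spec A$ containing $V(I)$ is everything, and uniqueness of the cell decomposition forces $u^\pm,\ell\in K$). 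All of that is correct.

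The gap is the final inclusion $L(A)\cap K\subseteq H(A)^+$, which is precisely the hard content of \cite[prop.\ 7.7]{GS}, and your sketch does not establish it. First, the apparatus you invoke --- root subgroups $u_{\pm\alpha}$, Chevalley commutator formulas, coroot subtori $\alpha^\vee(\GG_m)$ --- presupposes that $H$ is split (or at least that $R_u(P)$ is filtered by genuine root groups for a split maximal torus), whereas $H$ is only strictly isotropic; in the general case one only has the relative root system and the much more delicate Petrov--Stavrova calculus. Second, even in the split case the assertion that $[u_\alpha(x),u_{-\alpha}(y)]$ lands in the coroot subtorus is false as stated: torus elements $h_\alpha(t)=\alpha^\vee(t)$ arise as $w_\alpha(t)w_\alpha(1)^{-1}$ with $t\in A^\times$, not as single commutators, and producing a \emph{factorization} of an arbitrary $\ell\in L(A)\cap K$ into such elements is exactly where the difficulty lies. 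The natural attempt to split $\ell$ as (torus part)$\cdot$(element of $L^{\mathrm{der}}$) runs into the non-smooth isogeny $Z(L)^0\times L^{\mathrm{der}}\to L$ with finite multiplicative kernel $\mu$, and the obstruction in $H^1_{\fppf}(A,\mu)$ does not vanish for a general henselian couple (e.g.\ units congruent to $1$ mod $I$ need not be $n$-th powers when $n$ is not invertible), so ``henselianness applied to a smooth morphism'' is not available there. To close the argument you would need a concrete smooth, surjective-near-the-identity morphism from a product of copies of $R_u(P)$ and $R_u(P^-)$ onto (a neighborhood of $e$ in) $H$, or else simply cite \cite[prop.\ 7.7]{GS} as the paper does.
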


\subsection{R-equivalence}

We assume now that  $A$ is a nonzero finite $k$--algebra for  a field $k$.
Then $A$ is an Artinian $k$--algebra. We denote by $I$ its Jacobson radical
and remind  the reader that $(A,I)$ is an henselian couple.

Let $H$ be an $A$--reductive group scheme and 
 consider the Weil restriction \break $G=R_{A/k}(H)$, this is a smooth  affine connected algebraic 
 $k$--group \cite[A.5.9]{CGP}. We have $G(k)=H(A)$. 
 We use $R$-equivalence for group schemes over rings as defined in 
 \cite{GS}.

 \begin{slemma}\label{lem_R_eq}
 We decompose $A=A_1 \times \dots \times A_d$ where each $A_i$ is a local artinian 
 $k$--algebra of residue field $k_i$.
  
  \smallskip
  
  \noindent  (1) The map $H(A) \to \prod_i H(k_i)$ induces  isomorphisms
 $$
  G(k)/RG(k) \simlgr   H(A)/RH(A) \simlgr  \prod_i H(k_i)/RH(k_i) .
 $$

  \smallskip
  
  \noindent  (2) If $H$ is furthermore simply connected and strictly isotropic, 
  we have a commutative diagram  of isomorphisms
 
\[\xymatrix{
  H(A)/H(A)^+  \ar[r]^\sim  \ar[d]^\wr &    H(A)/RH(A) \ar[d]^\wr  & \ar[l]^\sim    G(k)/RG(k) \\ 
 \prod_i H(k_i)/H(k_i)^+  \ar[r]^\sim & \prod_i H(k_i)/RH(k_i)  . 
}\]

 \end{slemma}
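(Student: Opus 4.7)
The plan is to establish Part (1) in two independent steps---compatibility of $R$-equivalence with Weil restriction, and with the product decomposition of the base ring---and then deduce Part (2) from Part (1), Lemma~\ref{lem_GS}, and the Gille--Szamuely theorem $H(k)^+ = RH(k)$ for $k$ a field (appearing in \cite{GS}).

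For the first isomorphism of Part (1), I exploit the Weil restriction adjunction $\Hom_k(Y, R_{A/k}(H)) = \Hom_A(Y_A, H)$, which on $Y = \Spec k$ gives $G(k) = H(A)$. A witness $\phi \colon U \to G$ of $R$-equivalence in $G(k)$, with $U \subset \mathbb{A}^1_k$ open containing $\{0,1\}$, has adjoint $\phi' \colon U_A \to H$ that witnesses $R$-equivalence in $H(A)$. Conversely, given $\psi \colon V \to H$ with $V \subset \mathbb{A}^1_A$ open containing $\{0,1\}_A$, the finite (hence closed) projection $\pi \colon \mathbb{A}^1_A \to \mathbb{A}^1_k$ allows one to descend: setting $U := \mathbb{A}^1_k \setminus \pi(\mathbb{A}^1_A \setminus V)$, one has $U_A = \pi^{-1}(U) \subset V$, and $U$ contains $\{0,1\}$ since $\{0,1\}_A = \pi^{-1}\{0,1\}$ lies in $V$. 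Restricting $\psi$ to $U_A$ and passing to the adjoint yields a witness in $G(k)$.

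For the second isomorphism, the decomposition $A = \prod A_i$ yields $\mathbb{A}^1_A = \coprod \mathbb{A}^1_{A_i}$, so opens and morphisms decompose accordingly, reducing the problem to the local Artinian case $H(A_i)/RH(A_i) \simlgr H(k_i)/RH(k_i)$. The reduction map is surjective (smoothness of $H$ over $A_i$ plus henselianness of $(A_i, \gm_i)$) and manifestly preserves $R$-equivalence; for the converse, any open $U_0 \subset \mathbb{A}^1_{k_i}$ is affine, hence so is $U_{A_i}$, so smoothness of $H$ over $A_i$ allows one to lift morphisms $U_0 \to H_{k_i}$ to $U_{A_i} \to H$, thereby lifting $R$-chains from $H(k_i)$ to $H(A_i)$. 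The main obstacle lies in showing that any two lifts of a given element are $R$-equivalent in $H(A_i)$, which reduces to the $R$-triviality of $\ker(H(A_i) \to H(k_i))$: since $\gm_i$ is nilpotent, this kernel admits a finite filtration whose graded pieces identify with $\Lie(H)_{k_i} \otimes_{k_i} \gm_i^j/\gm_i^{j+1}$, and $R$-triviality may be established inductively by constructing one-parameter paths on each infinitesimal layer via the additive structure (or by invoking a corresponding stability result from \cite{GS}).

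For Part (2), Lemma~\ref{lem_GS} applied to the henselian couple $(A, I)$---a finite product of henselian Artinian local couples---gives the left vertical isomorphism $H(A)/H(A)^+ \simlgr \prod_i H(k_i)/H(k_i)^+$. The bottom horizontal arrow is the field case of Gille--Szamuely. Since the unipotent radicals $R_u(P)$ are $A$-isomorphic to affine spaces and hence $R$-trivial, the inclusion $H(A)^+ \subset RH(A)$ holds, producing a surjection $H(A)/H(A)^+ \twoheadrightarrow H(A)/RH(A)$; a diagram chase using the two vertical isomorphisms together with the bottom horizontal one forces this surjection to be an isomorphism, completing the diagram.
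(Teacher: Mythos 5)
Your argument is correct and, at its core, is the same dévissage as the paper's: filter $A$ by the powers of its radical and use that each infinitesimal layer is a vector group, hence $R$-trivial. The two proofs differ in how they exploit this. The paper works entirely on the $k$-side: it forms the Weil restrictions $G_j=R_{A_j/k}(H_{A_j})$, invokes the exact sequence $0\to(\GG_a)^{m_j}\to G_j\to G_{j-1}\to 1$ from \cite[Lemma 8.3]{GPS}, and observes that the torsor splits because $G_{j-1}$ is affine; the resulting product decomposition $G_j\simeq G_{j-1}\times(\GG_a)^{m_j}$ gives the bijection on $R$-equivalence classes in one stroke, with no need to lift chains or to analyze the kernel separately. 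You instead work internally in $H$ over the Artinian local rings, which forces you to do two things the paper's splitting makes automatic: lift $R$-chains along the nilpotent thickening $U_0\hookrightarrow U_{A_i}$ (fine, by smoothness and affineness of the opens of $\mathbb{A}^1$) and prove $R$-triviality of $\ker(H(A_i)\to H(k_i))$ via the identification of each graded piece with $\Lie(H)\otimes\gm_i^j/\gm_i^{j+1}$ and the scaling path $t\mapsto tX$. Both routes are sound; yours is more elementary and self-contained (you essentially reprove \cite[lemma 2.4]{GS} via the adjunction and the finite closed projection $\pi$, which the paper simply cites as a formal fact), while the paper's is shorter because the splitting of the torsor packages the lifting and the kernel analysis together. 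Two small points: the comparison $H(k_i)/H(k_i)^+\simeq H(k_i)/RH(k_i)$ for strictly isotropic simply connected groups is due to Gille \cite[thm.\ 7.2]{Gi1}, not Gille--Szamuely; and in part (2) you should note, as you implicitly do, that the left vertical isomorphism comes from Lemma \ref{lem_GS} applied to the henselian couple $(A,I)$ together with $A/I=\prod_i k_i$.
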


 \begin{proof} 
 (1) The first isomorphism $G(k)/RG(k) \simlgr H(A)/RH(A)$  is a formal fact \cite[lemma 2.4]{GS}.
 Let $n$ be the smallest positive integer such that $I^n=0$.
 We put $A_j=A/I^{j+1}$ for $j=0, \dots, n-1$, it comes 
 with the ideal $J_j= I^{j}/I^{j+1}=  I \, A_j $ which satisfies $J_j^2=0$. We have 
 $A_j/J_j= A_{j-1}$ for $j=1, \dots, n-1$.
 
 We put $G_j= R_{A_j/k}(H_{A_j})$ for $j=0,\dots , n-1$; 
 we have a sequence of homomorphisms 
 $G=G_{n} \to G_{n-1} \to \dots \to G_1 \to G_0 = \prod_i R_{k_i/k}(H_{k_i})$.
 It induces homomorphisms
 $$
 G(k)/R = G_{n}(k)/R\to G_{n-1}(k)/R \to \dots \to G_1(k)/R \to G_0(k)/R = \prod_i H(k_i)/R
 $$
 and we will show by a d\'evissage argument that all of them are  isomorphisms.

 Let $j$ be a an integer satisfying $1 \leq j \leq n-1$.
 According to a variant of \cite[Lemma 8.3]{GPS}, we have an exact sequence of fppf (resp.\ \'etale, Zariski)
sheaves on $\Spec(k)$
$$
0 \to \WW\Bigl( (t_j)_*\bigl( \Lie(H)(A_{j-1}) \otimes_{A_{j-1}} J_j \bigr) \Bigr)
\to R_{A_j/k}(H_{A_j}) ) \to R_{A_i/k}(H_{A_{j-1}} ) \to 1  .
$$
 where $t_j: \Spec(A_j) \to  \Spec(k)$ is the structural morphism.
 We have  then a sequence of $k$--algebraic groups
 $$
 0 \to (\GG_a)^{m_j} \to G_j \to G_{j-1} \to 1.
 $$
 The map $G_j \to G_{j-1}$ is a $(\GG_a)^{m_j}$-torsor
 which is trivial since $G_{j-1}$ is affine. We have then 
 a decomposition of $k$-schemes $ G_j \simlgr G_{j-1}  \times_k  (\GG_a)^{m_j}$ 
 which induces an isomorphism 
 $G_j(k)/R \to G_{j-1}(k)/R$. Thus the map $G(k)/R \to  G_0(k)/R = \prod_i H(k_i)/R$
 is an isomorphism as desired.

\smallskip

\noindent (2) We have $A/I=k_1 \times \dots \times k_d$.
Lemma \ref{lem_GS} provides an isomorphism $H(A)/H(A)^+ \simlgr H(A/I)/H(A/I)^+
=\prod_i H(k_i)/H(k_i)^+$ and we have an isomorphism $H(k_i)/H(k_i)^+ \simlgr H(k_i)/RH(k_i)$
for each $i$ \cite[thm. 7.2]{Gi1}.
This completes the proof by chasing diagram.
\end{proof}

 \section{Patching, R-equivalence and twisted flag schemes}
 
 \subsection{Using the implicit function theorem}
 We consider  a variation of the framework of \cite[2.4]{HHK}.
 Let $T$ be a complete DVR of fraction field $K$ and $t$ be an unformizing parameter. Let $\widehat R_0$ be ring
 containing $T$ and  which is also a complete discrete valuation ring having uniformizer $t$. We denote by $F_0$ the fraction field of $\widehat R_0$.
 
Let $\alpha>1$ be a real number and we define the absolute value on $F_0$ by 
$\mid\! t^n u \!\mid= \alpha^{-n}$ for $n \in \ZZ$ and $u \in 
(\widehat R_0)^\times$.
 
 Let $F_1$, $F_2$ be subfields of $F_0$ containing $T$.
 We further assume that we are given 
  $t$-adically complete $T$-submodules $V \subset F_1 \cap  \widehat R_0$
  and $W \subset  F_2 \cap  \widehat R_0$ 
satisfying the following conditions:

\begin{equation}\label{cond_I}
 V + W = \widehat R_0;
\end{equation}

\vskip-4mm

\begin{equation}\label{cond_II} 
V \cap t\widehat R_0= t V \quad \hbox{ and} \quad
W \cap t\widehat R_0= t W.
\end{equation}

 \noindent Note that Condition \eqref{cond_II}  is equivalent to

\begin{equation}\label{cond_IIbis} 
 V \cap t^n\widehat R_0= t^n V \quad \hbox{and} \quad 
W \cap t^n\widehat R_0= t^n W \quad \hbox{ for \,  each } \quad  n \geq 1.
\end{equation}

\begin{sremark} {\rm Condition \eqref{cond_II} above is added there compared with  \cite[2.4]{HHK}
but we do not require at this stage that $F_1$ is dense in $F_0$.
 }
\end{sremark}

We equip the submodules $V[\frac{1}{t}]$ of $F_0$ 
of the induced metric (and similarly for $W[\frac{1}{t}]$).

\begin{slemma}\label{lem_banach} (1) $V$ is closed in $\widehat R_0$.

\smallskip

\noindent (2) For $v \in V[\frac{1}{t}] \setminus \{0\}$, we have  $$
\mid v\mid= \mathrm{Inf}\{ \alpha^{n} \mid t^n v \in V \} .
$$

\smallskip

\noindent (3) We have $V= \mathrm{Inf}\{ v \in V[\frac{1}{t}]  \mid 
\enskip \mid v \mid  \,  \geq 0 \} $ and $V$ is a clopen submodule of
$V[\frac{1}{t}]$.

\smallskip

\noindent (4) $V[\frac{1}{t}]$ is closed in $F_0$ and is a Banach $K$-space. 
 
\end{slemma}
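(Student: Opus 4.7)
The plan is to prove the four assertions in sequence, with (2) serving as the bridge between the $T$-module structure of $V$ and the non-archimedean norm on $F_0$; once (2) is in hand, the remaining parts follow from standard manipulations.

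For (2), write $v_t$ for the $t$-adic valuation on $F_0$, so that $|v|=\alpha^{-v_t(v)}$. For $v\in V[\frac{1}{t}]\setminus\{0\}$ the set $S(v)=\{m\in\ZZ : t^m v\in V\}$ is nonempty, since $V[\frac{1}{t}]=\bigcup_{N\geq 0} t^{-N}V$. The key observation is the following descent: if $m\in S(v)$ with $m>-v_t(v)$, then $t^m v\in V\cap t\wh R_0$, which by \eqref{cond_II} equals $tV$; hence $t^{m-1}v\in V$. Iterating, the minimum of $S(v)$ is exactly $-v_t(v)$ (the reverse inequality is automatic from $V\subset\wh R_0$), so $\inf\{\alpha^m : t^m v\in V\}=\alpha^{-v_t(v)}=|v|$.

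Assertion (1) is immediate from the equivalent reformulation \eqref{cond_IIbis}, which gives $t^n V=V\cap t^n\wh R_0$ for every $n\geq 1$; this identifies the $t$-adic topology on $V$ with the subspace topology induced from $\wh R_0$. Since $V$ is $t$-adically complete by hypothesis and $\wh R_0$ is Hausdorff, $V$ is closed in $\wh R_0$. Assertion (3) amounts to identifying $V$ with the closed unit ball of $V[\frac{1}{t}]$: if $v\in V$ then $v\in\wh R_0$ gives $|v|\leq 1$; conversely, if $v\in V[\frac{1}{t}]$ satisfies $|v|\leq 1$, part (2) yields some $m\leq 0$ with $t^m v\in V$, and the $T$-module structure gives $v=t^{-m}(t^m v)\in V$. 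That $V$ is clopen in $V[\frac{1}{t}]$ is then automatic, since in a non-archimedean normed space the closed unit ball coincides with the open ball of radius $\alpha$.

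For (4), let $(v_k)$ be a sequence in $V[\frac{1}{t}]$ converging in $F_0$ to some $v$. Convergent sequences are bounded, so $|v_k|\leq\alpha^N$ for a common $N$; by (3), $t^N v_k\in V$, and $(t^N v_k)$ converges to $t^N v$ in $\wh R_0$. Since $V$ is closed in $\wh R_0$ by (1), we get $t^N v\in V$, whence $v\in V[\frac{1}{t}]$. For the Banach structure, a Cauchy sequence in $V[\frac{1}{t}]$ is bounded, so after rescaling by a suitable $t^N$ it becomes a Cauchy sequence in $V$, which converges by the $t$-adic completeness of $V$. The only step demanding genuine care is the descent argument in (2); once the algebraic and metric descriptions of $V$ are reconciled there, the rest is formal.
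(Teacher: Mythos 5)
Your proof is correct and follows essentially the same route as the paper's: condition \eqref{cond_II} is used to identify the induced topology on $V$ with the $t$-adic one (giving closedness from completeness), the descent on exponents in (2) is the same computation the paper performs by factoring $v=t^mv'$ with $v'\in V\setminus tV$, and (4) is reduced to (1) by rescaling with a power of $t$. Your reading of (3) as the statement that $V$ is the closed unit ball of $V[\frac{1}{t}]$ is the intended one, and your argument for it is the "readily follows from (2)" that the paper leaves implicit.
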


\begin{proof}
 (1) Our assumption is that the map $V \to \limproj V/t^{m+1} V$ is an isomorphism. Let $(x_n)$ be a sequence of $V$ 
 which converges in  $\widehat R_0$. For each $m \geq 0$, condition \eqref{cond_IIbis} shows that 
 the map $V/t^{m+1} V \to \widehat R_0/t^{m+1} \widehat R_0$ 
 is injective so that the sequence $(x_n)$ modulo $t^{m+1} V$ is stationary to some $v_m  \in V/t^{m+1} V$.
 The $v_m$'s define a point $v$ of $V$ and the sequence $(x_n)$ converges to $v$.
 
 \smallskip

 \noindent (2) We are given $v = t^m v'\in  V[\frac{1}{t}]$  with $v' \in V \setminus t V$
 and we have \break 
  $\mid\! v \! \mid= \mathrm{Inf}\{  \alpha^{n} \mid t^n v \in \widehat R_0 \}
  = \alpha^{m} \mathrm{Inf}\{  \alpha^{n} \mid t^n v' \in \widehat R_0 \}$.
  Condition (II) implies that  $v' \in    \widehat R_0 \setminus t  \widehat R_0$ so that 
  $\mid\! v \! \mid=0$. We conclude that  $\mid\! v \! \mid= \mathrm{Inf}\{ \alpha^{n} \mid t^n v \in V \}$.

 \smallskip
 
 \noindent (3) It readily follows from the assertion (2).

 \smallskip
 
 \noindent (4) Let $(x_n)$ be a sequence of $V[\frac{1}{t}]$ converging to some $x \in F_0$.
 We want to show that $x$ belongs to $V[\frac{1}{t}]$ so that we can assume that $x\not = 0$
 and that $\mid\! x_n \! \mid = \mid\! x \! \mid = \alpha^m$ for all $n \geq 0$.
 Assertion (2) shows that $t^m x_n$ is a sequence of $V$ and (1) shows that its limit
 $t^m x$ belongs to $V$. Thus $x \in V[\frac{1}{t}]$. We have shown that $V[\frac{1}{t}]$ is closed in $F_0$.
 
 Finally since $F_0$ is a Banach $K$--space so is $V[\frac{1}{t}]$.
\end{proof}

The following statement extends partially \cite[th. 2.5]{HHK} and \cite[prop. 4.1]{HHK2}.

\begin{sproposition}\label{prop_analytic}
Let $a,b,c$ be  positive integers.
Let $\Omega \subset (F_0)^a \times (F_0)^b$ be an open
neighborhood of $(0,0)$ and let 
$f: \Omega \to (F_0)^c$ be an analytic map.
We denote by $f^a: \Omega \cap (F_0)^a \to (F_0)^c$
and $f^b: \Omega \cap (F_0)^b \to (F_0)^c$. We assume that

\smallskip

(i) $f(0,0)=0$

\smallskip

(ii) the differentials $Df^a_{0}: (F_0)^{a} \to (F_0)^c$ and   $Df^b_{0}: (F_0)^{a} \to (F_0)^c$
satisfy $$
Df^a_{0}\Bigl( V[\frac{1}{t}]^a \Bigr) + Df^b_{0}\Bigl( V[\frac{1}{t}]^b\Bigr)=(F_0)^c.
$$

\smallskip

\noindent Then there is a real number
$\epsilon  > 0$ such that for all $y \in (F_0)^c$ with $\mid \! y \! \mid \, 
 \leq \epsilon$, there exist
$v \in  V^a$ and $w \in  W^b$ such that
$(v, w) \in  \Omega$  and $f(v, w) = y$.

\end{sproposition}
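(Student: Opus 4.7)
The plan is to prove the proposition via a Newton iteration set up in the non-archimedean Banach space framework established by Lemma~\ref{lem_banach}. As a preliminary, I would apply condition (ii) together with the open mapping theorem: the continuous $K$-linear map
\[
L : V[\tfrac{1}{t}]^a \oplus W[\tfrac{1}{t}]^b \lgr (F_0)^c, \qquad (v,w) \lmp Df^a_0(v) + Df^b_0(w),
\]
is surjective by hypothesis, and by Lemma~\ref{lem_banach}(4) its source and target are Banach $K$-spaces. The non-archimedean open mapping theorem (cf.\ Schneider's book) then produces a constant $C>0$ and a bounded (set-theoretic) section $\sigma = (\sigma_V,\sigma_W)$ of $L$ satisfying $\max\bigl(|\sigma_V(z)|,|\sigma_W(z)|\bigr) \leq C|z|$ for every $z \in (F_0)^c$.

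Next, I would use the analyticity of $f$ and the equality $f(0,0)=0$ to obtain a quadratic remainder estimate on a small ball: there exist $r, M>0$ such that the closed ball of radius $r$ around the origin is contained in $\Omega$ and
\[
\bigl|f(u') - f(u) - Df_{(0,0)}(u'-u)\bigr| \;\leq\; M\, \max(|u|,|u'|)\cdot|u'-u|
\]
for all $u,u' \in (F_0)^{a+b}$ of norm $\leq r$. With this in hand, set $u_0=0$ and iterate $u_{n+1} = u_n + \sigma\bigl(y-f(u_n)\bigr)$. Using $L\circ\sigma = \mathrm{id}$ and the bound above, a short calculation yields the quadratic recursion $\bigl|y-f(u_{n+1})\bigr| \leq MC^2\,\bigl|y-f(u_n)\bigr|^2$.

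The third step is to put $\epsilon := \min\bigl(r,\,1/C,\,1/(MC^2)\bigr)$ and check that the iteration converges to the desired solution whenever $|y|\leq \epsilon$. The quadratic recursion forces $|y-f(u_n)| \to 0$ doubly exponentially, and each increment satisfies $|u_{n+1}-u_n| \leq C\,|y-f(u_n)| \leq C|y|\leq 1$. The ultrametric inequality then makes $(u_n)$ Cauchy inside $V[\tfrac{1}{t}]^a \oplus W[\tfrac{1}{t}]^b$, with limit $u_*=(v_*,w_*)$ satisfying $f(u_*)=y$ by continuity, together with the norm estimates $|v_*|,|w_*|\leq C|y|\leq 1$. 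By Lemma~\ref{lem_banach}(3), the unit ball of $V[\tfrac{1}{t}]$ coincides with $V$ and likewise for $W$, so $v_* \in V^a$ and $w_* \in W^b$, as desired.

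The main obstacle I anticipate is setting up the open mapping theorem rigorously in this non-archimedean Banach setting---its use requires precisely that $V[\tfrac{1}{t}]^a$ and $W[\tfrac{1}{t}]^b$ are actual Banach spaces over the complete field $K$, which Lemma~\ref{lem_banach}(4) guarantees---together with the bookkeeping needed to keep the entire iteration inside the ball of radius $r$ where the quadratic estimate holds. Once these are in place, the ultrametric nature of the norm actually simplifies the convergence argument relative to the real case, since the tails of the iteration are controlled by a single term rather than by a geometric sum.
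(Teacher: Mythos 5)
Your argument is correct in substance and rests on exactly the same two pillars as the paper's proof: the Banach-space structure of $V[\tfrac{1}{t}]^a \times W[\tfrac{1}{t}]^b$ (Lemma \ref{lem_banach}(4)), the identification of $V$ and $W$ with the unit balls of these spaces (Lemma \ref{lem_banach}(2)--(3)), and the surjectivity of the differential of the restriction of $f$ to this subspace, which is hypothesis (ii). The only real difference is that the paper treats the local surjectivity of $\widetilde f = f\circ i$ as a black box --- it verifies strict differentiability at $(0,0)$ and surjectivity of $D\widetilde f_{(0,0)}$ and then cites the Banach-space implicit function theorem of Bourbaki/Schneider --- whereas you unfold the proof of that theorem: the non-archimedean open mapping theorem yields a bounded section $\sigma$ of $L=D\widetilde f_{(0,0)}$, and a Picard--Newton iteration does the rest. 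That is a legitimate and more self-contained route, and the ultrametric control of the tails is indeed a genuine simplification. Two small corrections to your estimates are needed. First, with the derivative frozen at the origin the recursion is geometric, not quadratic: the ultrametric inequality gives $|u_n|\leq C|y|$ for all $n$, so the remainder bound yields $|y-f(u_{n+1})|\leq MC^2\,|y|\cdot|y-f(u_n)|$ rather than $MC^2\,|y-f(u_n)|^2$ (the quadratic bound is only justified at the first step, where $u_0=0$); convergence still holds provided $MC^2|y|<1$, so take $\epsilon$ strictly below $1/(MC^2)$. Second, to keep every iterate inside the ball of radius $r$ where the remainder estimate is valid you need $C|y|\leq r$, so the first term of your minimum should be $r/C$ rather than $r$. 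Neither repair alters the structure of the argument.
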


\begin{proof}
We consider the continuous embedding $i: V[\frac{1}{t}]^a \times W[\frac{1}{t}]^b  \to (F_0)^a \times (F_0)^b$ and define $\widetilde \Omega=i^{-1}(\Omega)$ and 
the function $\widetilde f= f \circ i : \widetilde \Omega \to (F_0)^c$.

\begin{sclaim}  The map $\widetilde f$ 
is strictly differentiable at $(0,0)$ and
$D\widetilde{f}_{(0,0)}:  V[\frac{1}{t}]^a \times W[\frac{1}{t}]^b \to (F_0)^c$ is onto. 
\end{sclaim}

Since $f$ is $F_0$--analytic at $(0,0)$, it is strictly differentiable  \cite[I.5.6]{Sc}, that is, there exists
an open  neighborhood $\Theta$ of $(0,0)$ and a positive real number $\beta$ such that 
$$
\mid \! f(x_2) -f(x_1) - Df_{(0,0)}.(x_2-x_1) \! \mid \enskip \leq \enskip \beta 
\mid \! x_2 -x_1 \! \mid \qquad \forall x_1,x_2 \in \Theta.
$$
It is then strictly derivable as function between the 
Banach $K$--spaces $(F_0)^a \times (F_0)^b \to (F_0)^c$.
On the other hand the embedding  $i: V[\frac{1}{t}]^a \times W[\frac{1}{t}]^b  \to (F_0)^a \times (F_0)^b$
is $1$--Liftschitz so is strictly differentiable at $(0,0)$. As composite of strictly differentiable  functions,
$\widetilde f$ is strictly differentiable at $(0,0)$ \cite[\S 1.3.1]{BF}. Furthermore
the differential 
$D\widetilde{f}_{(0,0)}$  is the composite of
$$
V[\frac{1}{t}]^a \times W[\frac{1}{t}]^b \xrightarrow{\quad i \quad} (F_0)^a \times (F_0)^b \xrightarrow{\enskip Df^a_{0} + Df^b_{0} \enskip} (F_0)^c.
$$
Condition (ii) says exactly that $D\widetilde{f}_{(0,0)}$  is surjective. The Claim is proven.

We apply the implicit function theorem to the function $\widetilde f$ \cite[\S 1.5.2]{BF}
(see  \cite[\S 4]{Sc} for concocting a proof). Lemma \ref{lem_banach}.(4)
shows that $V[\frac{1}{t}]^a \times W[\frac{1}{t}]^b $ is a Banach $K$--space and so is $F_0$.
There exists then an open neighborhood $\Upsilon \subset \widetilde \Omega$
of $(0,0)$ in $V[\frac{1}{t}]^a \times W[\frac{1}{t}]^b$  such that $\widetilde f_{\mid \Upsilon}$ is open.
Up to shrink $\Upsilon$ we can assume that 
$\Upsilon \subset V \times W$ according to Lemma \ref{lem_banach}.(3).
There exists then a real number $\epsilon >0$ 
such that 
$$
\bigl\{ y \in (F_0)^c \mid \enskip  \mid \! y \! \mid \leq  \epsilon \bigr\} \enskip \subset \enskip
\bigl\{ y \in (F_0)^c \mid \enskip  \mid \! y \! \mid  < 2 \epsilon \bigr\} \subset \widetilde f(\Upsilon).
$$
We conclude that 
for all $y \in (F_0)^c$ with $\mid \! y \! \mid \, 
 \leq \epsilon$, there exist
$v \in  V^a$ and $w \in  W^b$ such that
$(v, w) \in  \Omega$  and $f(v, w) = y$.
\end{proof}

 \newpage

\begin{scorollary}\label{cor_analytic}
Let $n$ be a positive integer.
Let $\Omega \subset (F_0)^n \times (F_0)^n$ be an open neighborhood of $(0,0)$ and let 
$f: \Omega \to (F_0)^n$ be an analytic map which satisfies 

\smallskip

(i) $f(0,0)=0$

\smallskip

(ii) $f(x,0)=f(0,x)=x$ over an open neighborhood $\Upsilon$ of $0$.

\smallskip

\noindent Then there is a real number
$\epsilon  > 0$ such that for all $a $ with $\mid \! a \! \mid \, 
 \leq \epsilon$, there exist
$v \in  V^n$ and $w \in  W^n$ such that
$(v, w) \in  \Omega$  and $f(v, w) = a$.

\end{scorollary}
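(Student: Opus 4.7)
The plan is to deduce the corollary as a direct application of Proposition \ref{prop_analytic} in the case $a=b=c=n$. Hypothesis (i) of that proposition coincides with condition (i) here, so the only real work is to verify the differential surjectivity condition (ii) of the proposition.

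To do so, I would compute the two partial differentials at the origin. By hypothesis (ii), $f^a(x) := f(x,0)$ coincides with the identity map on a neighborhood of $0$ in $(F_0)^n$, so $Df^a_0 = \mathrm{id}_{(F_0)^n}$; symmetrically $Df^b_0 = \mathrm{id}_{(F_0)^n}$. Hence
$$
Df^a_0\bigl( V[\tfrac{1}{t}]^n \bigr) \;+\; Df^b_0\bigl( W[\tfrac{1}{t}]^n \bigr) \;=\; V[\tfrac{1}{t}]^n + W[\tfrac{1}{t}]^n.
$$

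It remains to identify this sum with $(F_0)^n$. Since $\widehat R_0$ is a complete DVR with uniformizer $t$ and fraction field $F_0$, we have $F_0 = \widehat R_0[\tfrac{1}{t}]$. Condition \eqref{cond_I} asserts $V + W = \widehat R_0$, so inverting $t$ yields $V[\tfrac{1}{t}] + W[\tfrac{1}{t}] = (V+W)[\tfrac{1}{t}] = F_0$, and the sum displayed above equals $(F_0)^n$. Thus hypothesis (ii) of Proposition \ref{prop_analytic} is satisfied, and applying the proposition produces the desired $\epsilon > 0$ together with, for any $a \in (F_0)^n$ with $\mid\! a \!\mid \,\leq \epsilon$, a pair $(v,w) \in V^n \times W^n \cap \Omega$ satisfying $f(v,w)=a$.

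Since the argument reduces to a mechanical verification of the hypotheses, no substantial obstacle arises. The content of the corollary lies precisely in observing that the normalization $f(x,0) = f(0,x) = x$ turns the abstract surjectivity condition of Proposition \ref{prop_analytic} into the additivity condition \eqref{cond_I} built into the pair $(V,W)$, which then propagates to $(F_0)^n$ after inverting $t$.
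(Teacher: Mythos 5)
Your proof is correct and follows the same route as the paper: the paper's own argument is exactly the observation that $a=b=c=n$ and $Df^a_0=Df^b_0=\mathrm{Id}_{(F_0)^n}$, so that Proposition \ref{prop_analytic} applies. You merely spell out the surjectivity check $V[\frac{1}{t}]^n+W[\frac{1}{t}]^n=(F_0)^n$ via condition \eqref{cond_I}, which the paper leaves implicit.
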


\begin{proof} In this case we have $a=b=c=n$ and $Df^a_{0} = Df^b_{0}= \mathrm{Id}_{ (F_0)^n}$
so that Proposition \ref{prop_analytic} applies.
\end{proof}

\subsection{Kneser-Tits' subgroups}

Continuing in the previous setting, 
we assume furthermore that $F_1$ is $t$-adically dense
in $F_0$. Let $F \subset F_1 \cap F_2$ be a subfield. 
For dealing later with Weil restriction issues it is 
convenient to deal with  a finite field extension $E$ of $F$.

\begin{sproposition}\label{prop_KT} 
 Let  $H$ be a semisimple simply connected
$E$--group scheme assumed strictly isotropic.
We put $G=R_{E/F}(H)$. 
For each overfield $L$ of $F$, we put 
$G(L)^+= H(L \otimes_F E)^+$ where the second group 
is that defined  in \S \ref{subsec_PS}.
 Then  we have the decomposition
$$
G(F_0)^+= G(F_1)^+ \, G(F_2)^+.
$$
\end{sproposition}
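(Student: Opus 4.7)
The plan is to combine the analytic patching of Corollary \ref{cor_analytic} with the henselian Kneser--Tits statement of Lemma \ref{lem_GS}, and to extend a neighborhood-of-identity decomposition to an arbitrary element of $G(F_0)^+$ via $t$--adic density of $F_1$ in $F_0$. First I would fix a strictly proper $E$--parabolic $P\subset H$ with opposite $P^-$, and write $U=R_u(P)$ and $U^-=R_u(P^-)$; by definition $G(L)^+=H(E\otimes_F L)^+$ is the subgroup generated by $U(E\otimes_F L)$ and $U^-(E\otimes_F L)$. I would pick an affine open neighborhood $\Omega$ of the identity in $G=R_{E/F}(H)$ together with local coordinates identifying $\Omega$ with an open subset of $\A^n_F$ sending $e$ to $0$, where $n=\dim_F G=[E:F]\cdot\dim_E H$.

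In these coordinates, the multiplication $\mu:\Omega\times\Omega\to\Omega$ is an $F$--analytic map satisfying $\mu(x,0)=x$ and $\mu(0,y)=y$, so Corollary \ref{cor_analytic} furnishes $\epsilon>0$ (which may be chosen $\le \alpha^{-1}$) such that every $a\in F_0^n$ with $|a|\le\epsilon$ is of the form $\mu(v,w)$ for some $v\in V^n$ and $w\in W^n$. Writing $g_v, g_w \in G(F_0)$ for the corresponding points yields the factorization $g_a = g_v\, g_w$. Condition \eqref{cond_II} combined with $|v|\le\alpha^{-1}$ forces $v\in (tV)^n$, so $g_v$ has integral coordinates reducing to $e$ modulo $t$. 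Because $V$ is $t$--adically complete, the couple $(E\otimes_F V,\, t\, E\otimes_F V)$ is henselian, and Lemma \ref{lem_GS} applied to $H_{E\otimes_F V}$ gives $g_v\in H(E\otimes_F V)^+ \subset H(E\otimes_F F_1)^+=G(F_1)^+$; the symmetric argument places $g_w$ in $G(F_2)^+$.

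To handle an arbitrary $g\in G(F_0)^+$, I would use that $F_1$ is $t$--adically dense in $F_0$. Since $U$ and $U^-$ are affine spaces over $E$ and $E\otimes_F F_1$ is dense in $E\otimes_F F_0$, the groups $U(E\otimes_F F_1)$ and $U^-(E\otimes_F F_1)$ are dense in their $F_0$--counterparts; continuity of the group law in $G(F_0)$ then makes $G(F_1)^+$ dense in $G(F_0)^+$. I would choose $g_1'\in G(F_1)^+$ close enough to $g$ that $r:=(g_1')^{-1}g$ has chart coordinates of absolute value $\le\epsilon$, apply the previous step to $r$ to obtain $r=g_v\, g_w$ with $g_v\in G(F_1)^+$ and $g_w\in G(F_2)^+$, and conclude $g = (g_1'\, g_v)\, g_w \in G(F_1)^+\cdot G(F_2)^+$.

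The main obstacle is the central step: merely approximating $g$ by an $F_1$--rational point yields a small factor in $G(F_1)$, but to get membership in the (possibly strictly smaller) elementary subgroup $G(F_1)^+$ one must combine the integrality provided by \eqref{cond_II} with the henselian Kneser--Tits identification of Lemma \ref{lem_GS}, which contains $\ker(H(A)\to H(A/I))$ inside $H(A)^+$ for any henselian couple $(A,I)$. The $t$--adic completeness of $V$ (and $W$) is exactly what supplies the required henselian structure after base change to $E$, and it is this ingredient that upgrades the naive analytic factorization in $G$ to one respecting the filtration by $^+$--subgroups; without it the Whitehead quotient $G(F_i)/G(F_i)^+$ could absorb the factors and the decomposition would fail.
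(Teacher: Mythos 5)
There is a genuine gap in the central step --- the one you yourself flag as the main obstacle --- namely placing the small factor $g_v$ in $G(F_1)^+$ rather than merely in $G(F_1)$. Your mechanism for this is to regard $(E\otimes_F V,\; t\,E\otimes_F V)$ as a henselian couple and invoke Lemma \ref{lem_GS}. But in the setting of the proposition $V$ is only a $t$-adically complete $T$-submodule of $F_1\cap\widehat R_0$; it is not assumed to be a ring, let alone an $F$-algebra, so $E\otimes_F V$ and $H(E\otimes_F V)$ are not defined. Even in the concrete situation of Lemma \ref{lem_dense0}, where $V=F_P\cap\widehat R_{\cp}$ happens to be a ring, it does not contain $F$, and --- more seriously --- $H$ is given only over the field $E$: Lemma \ref{lem_GS} requires a reductive, strictly isotropic group \emph{scheme} over the henselian couple $(A,I)$, together with an $A$-parabolic defining $H(A)^+$, and no such integral model of $H$ is available here (producing one compatibly with $H(E\otimes_F F_1)^+$ is not easier than the proposition itself). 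Likewise, ``integral coordinates reducing to $e$ modulo $t$'' presupposes an integral structure on the chart that an arbitrary analytic chart of $G$ at the identity does not carry.

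The paper's proof circumvents exactly this difficulty by choosing the chart more carefully. Writing $U_{\mathrm{last}}\subset R_u(P)$ for the last term of the filtration of the unipotent radical, it uses \cite[Lemma 3.3]{GPS} to find $h_1,\dots,h_n\in H(E)^+$ whose conjugates of $\Lie(U_{\mathrm{last}})(E)$ span $\Lie(H)(E)$, and takes as chart (after cutting down to an \'etale map $b'$) the product $b(x_1,\dots,x_n)={}^{h_1}u(x_1)\cdots {}^{h_n}u(x_n)$. Since the $h_i$ are rational over $E$ and each $u(x_i)$ lies in the unipotent radical, every point of this chart with coordinates in an overfield $L$ of $F$ lies in $G(L)^+$ \emph{by construction}; hence $v\in V^{dr}\subset F_1^{dr}$ immediately gives $b'_\sharp(v)\in G(F_1)^+$, with no henselian or integrality input whatsoever. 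Your framing of the remainder of the argument (Corollary \ref{cor_analytic} applied to the multiplication read through the chart, then density of $G(F_1)^+$ in $G(F_0)^+$ to reduce an arbitrary element of $G(F_0)^+$ to the small neighbourhood) coincides with the paper's; what must be repaired is only the identification of the two analytic factors as elements of the elementary subgroups, and that repair is precisely the choice of the unipotent parametrization.
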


\begin{proof} Without lost of generality we can assume that $F$ is infinite. The proof is based on an analytic argument requiring 
some preparation. We put $d= [E:F]$ and fix
an isomorphism $E \cong F^d$ of $F$--vector spaces.

Let $P$ be a strictly proper parabolic $E$--subgroup of $H$. 
Let $U$ be its 
unipotent radical and $U_{last}$ the last part of Demazure's filtration
\cite[\S 3.2]{GPS}. Let \break $u: \GG_{a,E}^d \simlgr U_{last}$ be an $E$--group isomorphism.
According to \cite[Lemma 3.3.(3)]{GPS}, $H(E)^+. \Lie(U_{last})(E)$
generates $\Lie(H)(E)$ as a vector space over $E$.
There exists $h_1,\dots, h_n \in H(E)^+$ such that 
$$
\Lie(H)(E)= \,  {^{h_1}\!\Lie}(U_{last})(E) \, + \,    ^{h_2}\!\Lie(U_{last})(E) \, + \, \dots 
\, + \,   ^{h_n}\!\Lie(U_{last})(E) . $$
We consider the map $b:  (\GG_{a,E}^d)^n \to H$, $b(x_1,\dots, x_n)= \, {^{h_1}u}(x_1) \,  \dots \,  
^{h_n}\!u(x_n)$. Its differential at $0$ is 
$db_{0}: E^{dn} \cong \Lie(U_{last})(E)^n \to \Lie(H)(E)$, $(X_1,\dots, X_n) \mapsto \sum_{i=1}^n \, \,   ^{h_i}\!X_i$, 
so is surjective. We cut  now 
the affine space  $(\GG_{a,E}^d)^n$
by some suitable affine $E$--subspace $\GG_{a,E}^r$
such that the restriction $b'$ of $b$ to  $(\GG_{a,E}^d)^n$ is such that $db'_{0}$ is an isomorphism.
 The map $b'$ is then \'etale at a neighborhood of $0$.
It follows that $b'_\sharp= R_{E/F}(b'): R_{E/F}\bigl(\GG_{a,E}^r\bigr) \to G=R_{E/F}(H)$
is \'etale also at a neighborhood of $0$ \cite[A.5.2.(4)]{CGP}.

Since the field $F_0$ is henselian,  
the local inversion theorem holds \cite[prop. 2.1.4]{GGMB}. 
We mean that there exists an open neighborhood $\Upsilon \subset (E \otimes_F F_0)^r$ such that
 the restriction 
$b'_{\sharp \mid \Upsilon}: \Upsilon \to G(F_0)$ is a topological open embedding.

We consider now the product morphism $q: \Upsilon \times \Upsilon \to G(F_0)$,
$q(x,y)= b'_\sharp(x)\,  b'_\sharp(y)$. We put  $\Omega= q^{-1}( b'_\sharp(\Upsilon))$, this an open
subset of $(E \otimes_F F_0)^{2r} \cong (F_0)^{2dr}$.
Then the restriction $q_{\mid \Omega}$ defines an (unique) analytical map
$f:  \Omega \to \Upsilon$ 
such that $q(x,y)= b'_\sharp( f(x,y) )$.

By construction we have $f(0,x)= f(x,0)=x$ for $x$ in a neighborhood of $0 \in (E \otimes_F F_0)^r \cong F_0^{dr}$.
We apply Corollary \ref{cor_analytic} to $f$ so that there exists $\epsilon >0$
such that  for each $a \in \Upsilon$ with $\mid a \mid  \,  \leq \,  \epsilon$, there exist
$v \in  W^{dr}$ and $w \in  W^{dr}$ such that
$(v, w) \in  \Omega$  and $f(v, w) = a$.
We denote by $\Upsilon_\epsilon= \Upsilon \cap B(0, \epsilon)$.
Then $b'_\sharp( \Upsilon_\epsilon)$ is an open neighborhood of $0$ in $(F_0)^{r}$.

Let us now prove that $G(F_0)^+= G(F_1)^+ \times G(F_2)^+$.
Since $F_1$ is dense in $F_0$, 
$G(F_1)^+$ is dense in $G(F_0)^+$.
It is then enough to show that $\Upsilon_\epsilon  \subset G(F_1)^+ \times G(F_2)^+$.
Let $g =b'_\sharp(a) \in b'_\sharp( \Upsilon_\epsilon)$.
Then  $a= f(v,w)$ with $(v, w) \in  \Omega$.
It follows that $g =b'_\sharp( a)= b'_\sharp( f(v,w)) = q(v,w) = b'_\sharp(v) \,  
b'_\sharp(w) \in  G(F_1)^+ \times G(F_2)^+$.
\end{proof}

This could be refined as follows.

\begin{sproposition}\label{prop_R_eq} Let 
$H$ be a reductive $E$-group and put $G=R_{E/F}(H)$.

\smallskip

\noindent (1) $RG(F_1) \, RG(F_2)$ contains an open neighborhood of $1$  in $G(F_0)$.

\smallskip

\noindent (2) If $RG(F_1)$ is dense in $RG(F_0)$, then  $RG(F_1) \, RG(F_2)=RG(F_0)$.

\smallskip

\noindent (3) If $H$ is semisimple simply connected and $H_{F_1 \otimes_F E}$ is 
strictly isotropic, then we have
$$
G(F_1)^+ \, RG(F_2)=G(F_0)^+ .
$$

\smallskip

\noindent (4) If $H$ is semisimple simply connected and $H_{F_i \otimes_F E}$ is strictly isotropic for $i=1,2$,
then $G(F_1)^+ \, G(F_2)^+=G(F_0)^+$.

\end{sproposition}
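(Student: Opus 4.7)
The strategy is to run once an analytic recipe analogous to that of Proposition \ref{prop_KT}, but with its parabolic-based chart replaced by an arbitrary étale chart at $1 \in G$, and then to extract each of the four statements in turn. For (1), choose a morphism $b : \mathbb{A}^r_E \to H$ with $b(0) = 1$ and $db_0$ an isomorphism (available because $H$ is smooth of some relative dimension $r$ over $E$). Weil restricting gives $b_\sharp = R_{E/F}(b) : R_{E/F}(\mathbb{A}^r_E) \to G$, still étale at $0$; since $F_0$ is henselian, the local inversion theorem yields an open neighborhood $\Upsilon$ of $0$ in $(E \otimes_F F_0)^r \cong F_0^{dr}$ embedding as a topological chart $b_\sharp : \Upsilon \hookrightarrow G(F_0)$ around $1$. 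Multiplication pulls back to an analytic map $f : \Omega \to \Upsilon$ with $b_\sharp(x) b_\sharp(y) = b_\sharp(f(x,y))$ and $f(0,x) = f(x,0) = x$, and Corollary \ref{cor_analytic} furnishes, for every $a \in \Upsilon$ with $|a|$ sufficiently small, a pair $(v,w) \in V^{dr} \times W^{dr}$ with $f(v,w) = a$. Since $V \subset F_1$ and $W \subset F_2$, the factors $b_\sharp(v)$ and $b_\sharp(w)$ lie in $G(F_1)$ and $G(F_2)$ respectively, and the morphisms $s \mapsto b_\sharp(sv)$ and $s \mapsto b_\sharp(sw)$ from $\mathbb{A}^1$ to $G$ — defined at both $s = 0$ and $s = 1$ as soon as $|v|$, $|w|$ are small enough — exhibit $b_\sharp(v) \in RG(F_1)$ and $b_\sharp(w) \in RG(F_2)$. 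Thus the neighborhood $b_\sharp(\Upsilon_\epsilon)$ of $1$ lies in $RG(F_1)\cdot RG(F_2)$, proving (1).

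Part (2) is a short density argument: given $g \in RG(F_0)$, the hypothesis lets us pick $g_1 \in RG(F_1)$ close enough to $g$ that $g_1^{-1} g \in b_\sharp(\Upsilon_\epsilon)$, whence $g_1^{-1} g \in RG(F_1)\cdot RG(F_2)$ by (1) and hence $g \in RG(F_1)\cdot RG(F_2)$. Part (3) blends (1) with two uses of Lemma \ref{lem_R_eq}(2). Applied to the finite $F_1$-algebra $F_1 \otimes_F E$, over which $H$ is semisimple simply connected and strictly isotropic by hypothesis, the lemma gives $G(F_1)^+ = RG(F_1)$; applied to $F_0 \otimes_F E$, over which strict isotropy descends by base change from $F_1 \subset F_0$, it gives $G(F_0)^+ = RG(F_0)$. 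Moreover $G(F_1)^+$ is dense in $G(F_0)^+$: both are generated by rational points of unipotent radicals of opposite strictly proper parabolics defined already over $F_1 \otimes_F E$, and those unipotent radicals, being iterated extensions of $\mathbb{G}_a$, have $F_1 \otimes_F E$-points dense in their $F_0 \otimes_F E$-points thanks to the $t$-adic density of $F_1$ in $F_0$. For $g \in G(F_0)^+$, pick $g_1 \in G(F_1)^+$ with $g_1^{-1} g$ in the neighborhood from (1); then (1) decomposes $g_1^{-1} g = b_\sharp(v)\cdot b_\sharp(w) \in RG(F_1)\cdot RG(F_2) = G(F_1)^+ \cdot RG(F_2)$, so $g \in G(F_1)^+ \cdot RG(F_2)$. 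The reverse inclusion is clear from $RG(F_2) \subset RG(F_0) = G(F_0)^+$.

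Finally, (4) is an immediate corollary of (3): the additional strict isotropy hypothesis over $F_2 \otimes_F E$, combined once more with Lemma \ref{lem_R_eq}(2), identifies $RG(F_2) = G(F_2)^+$ and converts (3) into $G(F_0)^+ = G(F_1)^+ \cdot G(F_2)^+$. The main obstacle is the density step in (3): one must verify that the parabolic witnessing strict isotropy over $F_1 \otimes_F E$ may serve equally to define $G(F_0)^+$ (routine by base change from $F_1 \subset F_0$) and that unipotent radicals of smooth parabolics are topologically iterated $\mathbb{G}_a$-extensions, so that the $t$-adic density of $F_1$ in $F_0$ propagates to density of $G(F_1)^+$ in $G(F_0)^+$.
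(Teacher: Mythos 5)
Your overall architecture for parts (2)--(4) matches the paper's, but part (1) breaks down at its very first step. You claim that a morphism $b:\mathbb{A}^r_E\to H$ with $b(0)=1$ and $db_0$ an isomorphism is ``available because $H$ is smooth of relative dimension $r$.'' Smoothness gives an \'etale map \emph{from} an open subset of $H$ \emph{to} affine space, not a morphism in the other direction, and in fact no such $b$ exists in general: already for $H=\GG_m$ (or any torus), every morphism $\mathbb{A}^r_{\bar E}\to \GG_{m,\bar E}$ is constant, since the units of a polynomial ring are the constants. Since your entire argument for (1) --- including the crucial step that $s\mapsto b_\sharp(sv)$ is a morphism from $\mathbb{A}^1$ exhibiting $b_\sharp(v)\in RG(F_1)$ --- rests on $b$ being defined on all of affine space, this is a genuine gap, not a cosmetic one. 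Producing a chart at $1$ that is (a) \'etale at $0$, (b) defined on a dense open subset of an affine space containing $0$ so that lines through the origin give the required rational curves, is precisely the nontrivial content that the paper supplies via Raghunathan's technique: choose a maximal torus $T\subset H$, a quasitrivial resolution $1\to S\to Q\to T\to 1$ with $Q=R_{C/E}(\GG_m)$ open in $\WW(C)$, and take $b(x_1,\dots,x_n)=\prod_i {}^{h_i}s(1+x_i)$ on the open locus where each $1+x_i$ is invertible, using that conjugates of $\Lie(T)$ span $\Lie(H)$. With that replacement (the domain is then only an open subset $U$ of affine space, but $0,v\in U$ still makes $s\mapsto b(sv)$ a rational curve defined at $s=0,1$), your argument for (1) becomes the paper's.

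Parts (2)--(4) as you present them coincide with the paper's proof: the translation/density argument for (2), the identifications $G(F_i)^+=RG(F_i)=H^+(F_i\otimes_F E)$ from Lemma \ref{lem_R_eq}(2) together with the density of $H^+(F_1\otimes_F E)$ in $H^+(F_0\otimes_F E)$ for (3), and the extra identification $RG(F_2)=G(F_2)^+$ for (4). Your explicit justification of the density step (unipotent radicals as iterated $\GG_a$-extensions plus $t$-adic density of $F_1$ in $F_0$) is a point the paper leaves implicit, and is correct.
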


The subgroups $G(F_1)^+$, $G(F_0)^+ $ are defined as in Proposition \ref{prop_KT}.
 
\begin{proof} Once again we put $d= [E:F]$ and fix
an isomorphism $E \cong F^d$ of $F$--vector spaces.

\smallskip

 \noindent (1) Let $T \subset H$ be a maximal $E$--torus and let $1 \to S  \to Q \xrightarrow{s} T \to 1$
 be a resolution of $T$  where $Q$ is a quasitrivial torus and $S$ is a torus.
 We have $Q=R_{C/E}(\GG_m)$ where $C$ is an \'etale $E$--algebra so that
 $Q$ is an open subset of the affine $E$--space $\WW(C)$.

 We use now Raghunathan's technique \cite[\S 1.2]{R}.
 There exists $h_1,\dots, h_n \in H(E)$ such that 
$\Lie(H)(E)=  \,  ^{h_1}\!\Lie(T)(E) \, + \,   ^{h_2}\!\Lie(T)(E) \, + \, \,  \dots \, + \,     ^{h_n}\!\Lie(T)(E) $.
We define the open $E$-subvariety $U \subset
\WW_E(C^n)$ where $1+x_1, \dots , 1+x_n$ belongs to 
$Q$.  
We consider the map $b: U \to H$, $b(x_1,\dots, x_n)= \, ^{h_1}\!s(1+x_1) \,  \dots \,  
^{h_n}\!s(1+x_n)$. Its differential at $0$ is  
$$
db_{0}: C^n \to \Lie(H)(E), \quad (c_1,\dots, c_n) \mapsto
 \sum\limits_{i=1}^n  \, \,  ^{h_i}\!ds(c_i),
 $$ 
and is onto (observe that $\Lie(Q)(E) \to \Lie(T)(E)$ is surjective since $Q \to E$ is smooth). 
We cut  now the $E$-affine space $\WW_E(C^n)$ by some suitable affine $E$--subspace $\WW_E(E^r)$ of  $\WW_E(C^n)$ 
such that the restriction $b'$ of $b$ to 
the $E$--variety $Y= U \cap \WW_E(E^r)$
is such that  $db'_{0}: \mathrm{Tan}_{Y,0} \to \Lie(H)(E)$ is an isomorphism.
Note that $Y$ and $H$ have same dimension $r$ over $E$.

The map $b'$ is then \'etale at a neighborhood of $0$.
It follows that \break $b'_\sharp= R_{E/F}(b'): 
X=R_{E/F}\bigl(Y\bigr) \to G=R_{E/F}(H)$
is \'etale also at a neighborhood of $0$ \cite[A.5.2.(4)]{CGP}.
Note that $X$ is an open subset of $R_{E/F}\bigl(\WW_E(E^r)\bigr) \cong \WW(E^r) \cong \mathbf{A}^{dr}_{F}$.

Since the field $F_0$ is henselian,  
the local inversion theorem holds \cite[prop. 2.1.4]{GGMB}. 
We mean that there exists an open neighborhood $\Upsilon \subset X(F_0) \,  \subset \,
(F_0 \otimes_F E)^r \cong (F_0)^{dr}$
 of $0$ such that the restriction 
$b'_{\sharp \mid \Upsilon}: \Upsilon \to G(F_0)$ is a topological open embedding.

We consider now the product morphism $q: \Upsilon \times \Upsilon \to G(F_0)$,
$q(x,y)= b'_\sharp(x) \, b'_\sharp(y)$. We put  $\Omega= q^{-1}( b'_\sharp(\Upsilon))$, this an open
subset of $(F_0)^{2dr}$.
Then the restriction $q_{\mid \Omega}$ defines a (unique) analytical map $f: 
 \Omega \to \Upsilon$ 
such that $q(x,y)= b'_\sharp( f(x,y) )$.

By construction we have $f(0,x)= f(x,0)$ for $x$ in a neighborhood of \break $0 \in (F_0 \otimes_F E)^r \cong F_0^{dr}$.
We apply Corollary \ref{cor_analytic} to $f$ 
 so that there exists $\epsilon >0$
such that  for each $a \in \Upsilon$ with $\mid a \mid  \,  \leq \epsilon$, there exist
$v \in  V^{dr}$ and $w \in  W^{dr}$ such that
$(v, w) \in  \Omega$  and $f(v, w) = a$.
We denote by $\Upsilon_\epsilon= \Upsilon \cap B(0, \epsilon)$.
Then ${b'_\sharp}( \Upsilon_\epsilon)$ is an open neighborhood of  $1$ in $(F_0)^{dr}$.

We claim that $\Upsilon_\epsilon \subset RG(F_1) \, RG(F_2)$.
Let $g =b'_\sharp(a) \in b( \Upsilon_\epsilon)$.
Then  $a= f(v,w)$ with $(v, w) \in  \Omega$.
It follows that $g= b'_\sharp( a)= b'_\sharp( f(v,w)) = q(v,w) = b'_\sharp(v) \, b'_\sharp(w) \in  RG(F_1) \times RG(F_2)$.

 \smallskip
 
 \noindent (2)  
 If $RG(F_1)$ is furthermore dense in $RG(F_0)$, then  (1) shows that 
 $RG(F_1) \, RG(F_2)$ is a dense open subset of $RG(F_0)$.
 Thus $RG(F_1) \, RG(F_2)=RG(F_0)$

 \smallskip
 
 \noindent (3) We assume that $H$ is semisimple simply connected and 
 that  $G_{F_1 \otimes_F E_1}$ is strictly isotropic.
 According to Lemma \ref{lem_GS}.(2), we have
 $G(F_1)^+=RG(F_1)=RH(F_1 \otimes_F E)=H^+(F_1 \otimes_F E)$
 and similarly for $F_0$.

Since $H^+(F_1 \otimes_F E)^+$ is dense in $H^+(F_0 \otimes_F E)$, it follows that 
$RG(F_1)$ is dense in $RG(F_0)$. Assertion  
  (2) yields then  $RG(F_1) \, RG(F_2)=RG(F_0)$.
  Lemma \ref{lem_R_eq}.(2) states that $G(F_1)^+=RG(F_1)$
  and $G(F_0)^+=RG(F_0)$. We conclude that 
  $G(F_1)^+ \, RG(F_2)=G(F_0)^+$.

 \smallskip
 
 \noindent (4) We have  furthermore  $G(F_2)^+= RG(F_2)$ so that (3)
 yields  $G(F_1)^+ \, G(F_2)^+ =G(F_0)^+$.
\end{proof}

 \begin{sremark} {\rm Note that (1) shows in particular that  
 $RG(F_0)$ is an open subgroup of $G(F_0)$.
  }
 \end{sremark}

 The main result of the section is the following patching statement on twisted flag varieties.

 \begin{stheorem} \label{thm_key} We put $F=F_1 \cap F_2$. 
 Let $H$ be a reductive $E$--group and let 
 $X$ be a twisted flag $E$--variety of $H$.
  We put $G=R_{E/F}(H)$ and $Z=R_{E/F}(X)$.
 If $Z(F_1) \not = \emptyset$
 and  $Z(F_2) \not = \emptyset$, then  $Z(F) \not = \emptyset$.
 \end{stheorem}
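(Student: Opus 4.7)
\medskip

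\noindent\textbf{Proof plan.} The plan is to translate the existence of an $F$-point of $Z$ into the existence of a parabolic subgroup of $G$ of a prescribed type defined over $F$, obtained by patching the parabolics stabilizing $z_1$ and $z_2$.

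First I would realize $Z$ as the variety $\Par_I(G)$ of parabolic $F$-subgroups of $G$ of a suitable type $I$: this is immediate when $E/F$ is separable, since $R_{E/F}$ sends parabolics of $H$ to parabolics of the reductive $F$-group $G$. Writing $L_0 = F_0 \otimes_F E$, the points $z_i \in Z(F_i)$ then correspond to parabolic $F_i$-subgroups $P_i \subset G_{F_i}$ of type $I$. By Borel--Tits applied over $F_0$ (componentwise on the factors of $L_0$), the base changes $P_{1,F_0}$ and $P_{2,F_0}$ are $G(F_0)$-conjugate, so there exists $g \in G(F_0)$ with $g\cdot z_1 = z_2$, equivalently $g\, P_{1,F_0}\, g^{-1} = P_{2,F_0}$.

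Next I would reduce to the situation where $g$ is $t$-adically close to $1 \in G(F_0)$. Since $G$ is $F$-smooth and $F_1$ is $t$-adically dense in $F_0$, the image of $G(F_1) \to G(F_0)$ is dense, so one may approximate $g$ by some $g_1' \in G(F_1)$; replacing $z_1$ by $g_1'\cdot z_1 \in Z(F_1)$ and $P_1$ by $g_1' P_1 (g_1')^{-1}$, the new intertwining element $g(g_1')^{-1}$ can be made arbitrarily close to $1$. Proposition \ref{prop_R_eq}(1) asserts that $RG(F_1)\cdot RG(F_2)$ contains an open neighborhood of $1$; inverting and using that each $RG(F_i)$ is a subgroup, $RG(F_2)\cdot RG(F_1)$ also contains an open neighborhood of $1$. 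One can therefore write $g = h_2 h_1$ with $h_1 \in RG(F_1) \subset G(F_1)$ and $h_2 \in RG(F_2) \subset G(F_2)$.

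Combining $g = h_2 h_1$ with $g\, P_{1,F_0}\, g^{-1} = P_{2,F_0}$ yields
\[
h_1 P_1 h_1^{-1} = h_2^{-1} P_2 h_2
\]
as parabolic subgroups of $G_{F_0}$; equivalently, the points $h_1 \cdot z_1 \in Z(F_1)$ and $h_2^{-1}\cdot z_2 \in Z(F_2)$ coincide inside $Z(F_0)$. Because $Z$ is separated over $F$ and $F_1 \cap F_2 = F$, one has $Z(F_1) \cap Z(F_2) = Z(F)$, so this common point furnishes the required element of $Z(F)$.

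The hard part will be the decomposition $g = h_2 h_1$: it rests on Proposition \ref{prop_R_eq}(1), itself delivered by the analytic implicit function theorem of Corollary \ref{cor_analytic} applied to the Banach $K$-spaces $V[\frac{1}{t}]$ and $W[\frac{1}{t}]$. A secondary subtlety is the realization of $Z$ as $\Par_I(G)$, straightforward in the separable case but requiring extra care when $E/F$ is inseparable.
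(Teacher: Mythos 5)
Your overall strategy --- find a conjugator $g \in G(F_0)$ carrying $z_1$ to $z_2$, split it as a product of elements defined over $F_1$ and $F_2$, and patch --- is exactly the paper's. But there is a genuine gap at the step where you reduce to $g$ close to $1$: you assert that ``since $G$ is $F$-smooth and $F_1$ is $t$-adically dense in $F_0$, the image of $G(F_1)\to G(F_0)$ is dense.'' Smoothness plus density of $F_1$ in $F_0$ does not give this: $F_1$ is not complete (nor henselian), so an $F_0$-point of $G$ produced by the implicit function theorem has coordinates that are analytic functions of free parameters with coefficients in $F_0$, and choosing the free parameters in $F_1$ does not return an $F_1$-point. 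Density of $G(F_1)$ in $G(F_0)$ is a weak-approximation-type statement that can fail already for tori: $G(F_0)/RG(F_0)$ is a discrete arithmetic invariant (e.g.\ an $H^1$ of a flasque kernel), and the map from $G(F_1)$ to it need not be onto. This is precisely why Proposition \ref{prop_R_eq} only asserts in general that $RG(F_1)\,RG(F_2)$ contains a neighborhood of $1$, and upgrades this to a global decomposition only under a density hypothesis (part (2)) or strict isotropy of a simply connected group (parts (3) and (4)). Without the density, your $g$ cannot be moved into the neighborhood where part (1) applies.

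The paper's fix is the following, and is what your argument is missing. First reduce to $H$ semisimple simply connected and absolutely simple (the variety of parabolics is unchanged); then $Z(F_i)\neq\emptyset$ forces $H$ to be strictly isotropic over $F_i\otimes_F E$. Lemma \ref{lem_GS0} lets one replace the conjugator $h$ by an element of $H^+(E\otimes_F F_0)$ (modifying it by an element of $P_2(E\otimes_F F_0)$, which fixes $x_2$), and Proposition \ref{prop_R_eq}(4) then gives the exact decomposition $h=h_1h_2$ with $h_i\in H^+(E\otimes_F F_i)$ --- no approximation of $h$ itself is needed. The density that makes part (4) work is that of $H^+(F_1\otimes_F E)$ in $H^+(F_0\otimes_F E)$, which is clear because $H^+$ is generated by points of unipotent radicals, i.e.\ of affine spaces. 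Note also that the paper never identifies $Z$ with a flag variety of $G=R_{E/F}(H)$ (delicate for inseparable $E/F$, as you observed, since $R_{E/F}(H)$ need not then be reductive); it works directly with $Z(F_i)=X(F_i\otimes_F E)$ and conjugacy of parabolics of $H$ over the semilocal algebra $E\otimes_F F_0$ via \cite[XXVI.5.2]{SGA3}, which handles all cases uniformly.
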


 \begin{proof} Without loss of generality, we can assume that $H$ is
 semisimple simply connected and that it is absolutely $E$--simple.
 This implies that $H$ is strictly $F_i \otimes_F E$--isotropic for $i=1,2$.
 Let $x_i \in Z(F_i)= X( F_i \otimes_F E)$ and denote by $P_i=\Stab_{G_{F_i}}(x_i)$
 its stabilizer, this is  parabolic $F_i \otimes_F E$--subgroup of $H$.
Since $F_i \otimes_F E$ is a semilocal algebra, we have
 $x_1=h.x_2$ for some $h \in H(E \otimes_F F_0)$ \cite[XXVI.5.2]{SGA3}.
According to Lemma \ref{lem_GS0}, we have 
$H(E \otimes_F F_0)= H^+(E \otimes_F F_0) \,  P_2(E \otimes_F F_0)$
so we can assume that $h \in H^+(E \otimes_F F_0)$.
According to Proposition \ref{prop_R_eq}.(4), 
we can write $h=h_1 \, h_2$ with $h_i \in H^+(E \otimes_F F_i)$ for $i=1,2$. Since $x_1=h.x_2$, we obtain $h_1^{-1}. x_1= h_2^{-1}. x_2$.
This  defines a point of  $X(E)=Z(F)$.
 \end{proof}

 \section{Relation with the original HHK method}
 
We recall the setting.
Let $T$ be an excellent  complete discrete valuation ring with fraction field $K$, residue field $k$ and
uniformizing parameter $t$. 
Let $F$ be a one-variable function field over $K$
and let $\gX$ be a normal model of $F$, i.e. a normal connected
projective $T$-curve with function field $F$.
 We denote by $Y$ the closed fiber of $\gX$ and fix a separable closure $F_s$ of $F$.
   
 For each point $P \in Y$, let $R_P$ be the local ring of
 $\gX$ at $P$; its completion $\widehat R_P$ is a domain 
 with fraction field denoted by $F_P$.

 For each subset $U$ of $Y$ that is contained
 in an irreducible component of $Y$ and does not meet the other components, 
we define
 $R_U= \bigcap\limits_{P \in U} R_P \subset F$. We denote by $\widehat R_U$
 the $t$--adic completion of $R_U$.
 The rings $R_U$ and $\widehat R_U$ are excellent normal domains 
 and we denote by $F_U$ the fraction field of $\widehat R_U$
 \cite[Remark 3.2.(b)]{HHK3}.
 
 Each height one prime $\cp$ in $\widehat R_P$ that contains
  $t$ defines a branch of $Y$ at $P$ lying 
 on some irreducible component of $Y$. 
 The $t$-adic completion  $\widehat R_{\cp}$ of the local ring
 $R_{\cp}$ of $\widehat R_P$ at $\cp$ is a complete DVR
 whose fraction field is denoted by $F_{\cp}$. 
 The field $F_{\cp}$ contains also $F_U$ if $U$ 
 is an irreducible open subset of $Y$ such that $P \in \ol{U} \setminus U$.
 We have then a diagram of fields
 
\[\xymatrix{
 &   F_{\cp}  & \\ 
 F_P \ar[ru] & & F_U  . \ar[lu]
}\]

 \begin{sexample} \label{sexample1}{\rm We assume that $T=k[[t]]$ and 
 take $X= \PP^1_K$, $\gX= \PP^1_T$,
 $P= \infty_k$, $U=\mathbb{A}^1_k= \Spec(k[x])$.
 The ring $R_U$ contains $k[[t]][x]$ and is 
 its localization with respect to the multiplicative set $S$
 of elements which are units modulo $t$.
 The  $t$-adic completion of $R_U$ is   $\widehat R_U=
 k[x][[t]]$; we have $F_U= Frac( \widehat R_U)=
 k(x)((t))$.
 The local ring of $\gX$ at $P=\infty_k$ is $R_P=k[[t]][x^{-1}]_{(x^{-1},t)}$
 so that its completion is $\widehat R_P= k[[t,x^{-1}]]$;
 in particular $F_P=k((t,x^{-1}))$.
 
 We take  $\cp= t \widehat R_P \subset \widehat R_P$
 and  the $t$-adic completion  $\widehat R_{\cp}$ 
 of the local ring
 $R_{\cp}$ of $\widehat R_P$ at $\cp$ is a complete DVR
 which is $k((x^{-1}))[[t]]$. In particular $F_\gp= k((x^{-1}))((t))$.
 }
 \end{sexample}

 \medskip

 \begin{ssetting} \label{setting_hhk} {\rm
 Let $\cP$ be a non-empty finite set of closed points of $Y$ that contains
 all the closed points at which distinct irreducible components meet.
 Let $\cU$ be the set of connected components of $Y \setminus \cP$ and let $\cB$
 be the set of branches of $Y$ at points of $\cP$.
 This yields a finite inverse system of field $F_P, F_U, F_\gp$ (for $P \in \cP$;
 $U \in \cU$, $\gp \in \cB$) where $F_P, F_U \subset F_\gp$ if $\gp$
 is a branch of $Y$ at $P$ lying in the closure of $U$.
 }
\end{ssetting}

\begin{slemma}  \label{lem_dense0} We assume that  $X= \PP^1_K$, $\gX= \PP^1_T$,
 $P= \infty_k$, $U=\mathbb{A}^1_k= \Spec(k[x])$ and $\gp$ the branch
 of $P$. We put $F_1=F_P$, $F_2=F_U$
 and $F_0=F_\gp$.

\smallskip

\noindent (1) $F_1$ is $t$-dense in $F_0$.

\smallskip

\noindent (2)  We put $V= F_1 \cap \widehat R_\cp$ and   $W= F_2 \cap \widehat R_\cp$.
Then $V$ and $W$ satisfy conditions \eqref{cond_I} and \eqref{cond_II}.
 
\end{slemma}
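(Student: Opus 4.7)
Everything takes place inside the explicit rings of Example \ref{sexample1}: recall $\widehat R_P = k[[t,x^{-1}]]$ with $F_P = k((t,x^{-1}))$, $\widehat R_U = k[x][[t]]$ with $F_U = k(x)((t))$, and $\widehat R_\gp = k((x^{-1}))[[t]]$ with $F_\gp = k((x^{-1}))((t))$. The natural inclusions $F_P, F_U \hookrightarrow F_\gp$ come from $x = 1/x^{-1} \in k((t,x^{-1}))$ and $k(x) \hookrightarrow k((x^{-1}))$ respectively. The first step of the plan is to identify $V$ and $W$ intrinsically. Because $\widehat R_\gp$ is the $t$-adic completion of the DVR $k[[t,x^{-1}]]_{(t)}$, the $t$-adic valuation on $F_\gp$ restricts to the valuation on $F_P$ cut out by the height-one prime $(t) \subset k[[t,x^{-1}]]$; hence $V = F_P \cap \widehat R_\gp = k[[t,x^{-1}]]_{(t)}$. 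Similarly $W = F_U \cap \widehat R_\gp = k(x)[[t]]$.

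\textbf{Part (1).} I would write an arbitrary $z \in F_\gp$ as a Laurent series $\sum_{n \geq n_0} a_n t^n$ with $a_n \in k((x^{-1}))$ and approximate by the finite truncations $z_N := \sum_{n_0 \leq n < N} a_n t^n$. The key observation is that $k((x^{-1})) \subset F_P$ (any $a \in k((x^{-1}))$ splits as a polynomial in $x$ plus a tail in $k[[x^{-1}]] \subset \widehat R_P$), so each $z_N$ already lies in $F_P$, while $z - z_N \in t^N \widehat R_\gp$ by construction, giving $t$-density.

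\textbf{Part (2).} For \eqref{cond_I}, I would apply the coefficientwise $k$-linear splitting $k((x^{-1})) = k[x] \oplus x^{-1}k[[x^{-1}]]$: the polynomial parts of the $t$-coefficients of a given $w \in \widehat R_\gp$ assemble into an element of $k[x][[t]] \subset W$, while the remaining tails assemble into an element of $k[[x^{-1}]][[t]] = k[[t,x^{-1}]] \subset V$. For \eqref{cond_II} on $W = k(x)[[t]]$ the argument is immediate, since $k(x) \hookrightarrow k((x^{-1}))$ is injective on constant-in-$t$ coefficients. For $V$, writing $v = a/s$ with $a,s \in k[[t,x^{-1}]]$ and $s \notin (t)$, the reduction $s \bmod t \in k[[x^{-1}]]$ is nonzero, so $s$ is a unit in $\widehat R_\gp$; the assumption $v \in t\widehat R_\gp$ then gives $a = sv \in t\widehat R_\gp \cap k[[t,x^{-1}]]$, and a $t^0$-coefficient inspection shows that this intersection equals $t\,k[[t,x^{-1}]]$, whence $v \in tV$.

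\textbf{Main obstacle.} The only non-automatic point is the identification $V = k[[t,x^{-1}]]_{(t)}$, which rests on recognizing $\widehat R_\gp$ as the $t$-adic completion of this DVR and on tracking the compatibility between the $t$-adic valuation on $F_P$ and its restriction from $F_\gp$. Once that is in hand, both assertions reduce to elementary manipulations on formal power series, with the splitting $k((x^{-1})) = k[x] \oplus x^{-1}k[[x^{-1}]]$ doing all the work for \eqref{cond_I}.
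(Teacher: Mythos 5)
Your verification of condition \eqref{cond_I} is essentially the paper's own argument: split each $t$-adic coefficient of an element of $\widehat R_\cp$ into its polynomial-in-$x$ part (landing in $\widehat R_U\subset W$) and its power-series-in-$x^{-1}$ part (landing in $\widehat R_P\subset V$); your splitting $k((x^{-1}))=k[x]\oplus x^{-1}k[[x^{-1}]]$ is exactly the decomposition $f=f_1+f_2$ used there. Your route to part (1) (truncating the Laurent expansion in $t$ and using $k((x^{-1}))\subset F_P$) differs from the paper's (density of $R_\cp$ in its $t$-adic completion, then clearing denominators into $\widehat R_P$); both are fine. For condition \eqref{cond_II} you work much harder than necessary: since $V=F_1\cap\widehat R_\cp$ and $t\in F_1^\times$, any $v\in V\cap t\widehat R_\cp$ satisfies $v/t\in F_1\cap\widehat R_\cp=V$ directly, with no need to identify $V$ with $k[[t,x^{-1}]]_{(t)}$. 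This one-line argument also sidesteps your identification $W=k(x)[[t]]$, which is delicate: $\mathrm{Frac}(k[x][[t]])$ is in general a proper subfield of $k(x)((t))$ (despite what Example \ref{sexample1} suggests), so your check of \eqref{cond_II} for $W$ addresses a possibly larger module than $F_2\cap\widehat R_\cp$; the repair is the same triviality using $t\in F_2^\times$.

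The one substantive gap is generality. You work throughout with the explicit rings of Example \ref{sexample1}, which presuppose $T=k[[t]]$, whereas the lemma is invoked in the proof of Theorem \ref{thm_main_hhk} for an arbitrary excellent complete DVR $T$ --- in particular $T=\ZZ_p$ or a $p$-adic integer ring, which is the motivating case of the whole paper. In mixed characteristic there is no coefficient field, $\widehat R_\cp$ is not $k((x^{-1}))[[t]]$, and the splitting of $k((x^{-1}))$ has no literal meaning. The fix is what the paper's proof actually does: represent $f\in\widehat R_\cp$ as a $t$-adically convergent sum $\sum_i x^{m_i}\bigl(\sum_j a_{i,j}x^{-j}\bigr)t^i$ with $a_{i,j}\in T$ and split each term according to the sign of $m_i-j$, so that one summand lies in $T[[x^{-1}]]=\widehat R_P\subset V$ and the other in the $t$-adic completion of $T[x]$, hence in $\widehat R_U\subset W$. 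Your part (1) needs the analogous adjustment (or the paper's denominator-clearing argument, which is characteristic-free).
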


 \begin{proof}
 \noindent  (1) We are given $u_0/v_0 \in F_0$ with $u_0, v_0 \in \widehat R_\cp$,  $v_0 \not =0$.
 There exists elements $u, v \in R_\gp$ very close respectively of $u_0,v_0$ with $v \not =0$.
 Let $s_1, s_2 \in R_p \setminus R_p \gp$ such that $s_1 u \in R_P$ and $s_2 v \in R_\gp$.
 Then $u_0/v_0$ is very close of $(s_1s_2 u) / (s_1s_2 v) \in F_1$.

  \sm
  
  \noindent (2) Condition \eqref{cond_II} is obviously fullfilled since $t \in F_1 \cap F_2$.
  For establishing condition \eqref{cond_I}, we are given an element $f$ of $\widehat R_\gp$ and 
  may write it as $f= \sum\limits_{i=0}^\infty x^{m_i}  \Bigl( \sum_{j=0}^\infty a_{i,j} \frac{1}{x^j} \Bigr) t^i$
  where the $m_i$'s are non-negative integers and $a_{i,j} \in T$. 
  We decompose $$
  f= f_1 + f_2=  \sum_{i=0}^\infty x^{m_i} \Bigl( \sum_{j=m_i}^\infty a_{i,j} \frac{1}{x^j} \Bigr) t^i
  \quad +  \quad
   \sum_{i=0}^\infty x^{m_i}  \Bigl(\sum_{j=0}^{m_i-1} a_{i,j} \frac{1}{x^j} \Bigr)  t^i 
  $$
  We observe that $f_2$ belongs to $\widehat R_P$ so belongs to $V$.
We recall that $R_U$ is the localization of $T[x]$ 
with respect to the elements which are units modulo $t$. 
We conclude that $f_2$ belongs to $W$ as desired.
 \end{proof}

 \begin{stheorem} \label{thm_main_hhk} Let $G$ be a reductive $F$--algebraic group.

 \smallskip
 
 \noindent (1) Let $Z$ be a twisted flag projective $F$--variety for $G$.
 Then $Z(F) \not= \emptyset$ if and only if $Z(F_U) \not = \emptyset$ for 
 each $U \in \cU$ and  $Z(F_P) \not = \emptyset$ for 
 each $P \in \cP$.
 
 \smallskip
 
 \noindent (2) For each $U \in \cU$ (resp.\ each $P \in \cP$), 
 we fix an $F$--embeddings $i_U: F_s \to F_{U,s}$  (resp.\ $i_P: F_s \to F_{P,s}$)
 providing identifications $\Delta(G_{F_s}) \simlgr \Delta(G_{F_{U,s}})$
 (resp.\, $\Delta(G_{F_s}) \simlgr \Delta(G_{F_{P,s}})$).
 The Tits index $\Delta_0(G)$ is the smallest subset
  of $\Delta(G_{F_s})$
 which is stable under the $\star$--action of $\Gal(F_s/F)$ and such that 
  $\Delta_0(G) \subset \Delta_0(G_{F_U})$ for each $U \in \cU$ and  $\Delta_0(G) \subset \Delta_0(G_{F_P})$ for  each $P \in \cP$.
 
 \end{stheorem}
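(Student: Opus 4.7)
The forward direction of (1) is trivial. For the converse, the strategy is to reduce the general $\gX$ to $\gX = \PP^1_T$ via Weil restriction, and then iterate the two-field patching of Theorem \ref{thm_key} down to the base case covered by Lemma \ref{lem_dense0}. Pick a finite surjective morphism $\pi: \gX \to \PP^1_T$ and set $F' := K(\PP^1)$, so that $F/F'$ is a finite extension. Replacing $(G, Z)$ by $(R_{F/F'}(G), R_{F/F'}(Z))$ lands in the Weil-restricted framework for which Theorem \ref{thm_key} is designed. For a finite set $\cP'$ of closed points in the residue $\PP^1_k$ containing $\pi(\cP)$ and the ramification locus of $\pi$, the decompositions $\widehat R_{v'} \otimes_{F'} F \cong \prod_{v \mapsto v'} \widehat R_v$ transport the local solvability hypotheses from $\gX$ to $\PP^1_T$.

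On $\PP^1_T$, since $\mathbb{A}^1_k$ minus a finite set of closed points remains irreducible, we may take $|\cU'| = 1$; after a coordinate change assume $\infty_k \in \cP'$. The base case $\cP' = \{\infty_k\}$, $\cU' = \{\mathbb{A}^1_k\}$ is precisely the setting of Lemma \ref{lem_dense0}: the triple $(F_1, F_2, F_0) = (F_\infty, F_{\mathbb{A}^1_k}, F_\gp)$ satisfies the $t$-adic density of $F_1$ in $F_0$ together with the module conditions \eqref{cond_I}, \eqref{cond_II}, so Theorem \ref{thm_key} delivers $Z'(F') \neq \emptyset$ from the given local points. For $|\cP'| > 1$, pick $P \in \cP' \setminus \{\infty_k\}$ and the unique branch $\gp$ of $P$ along $U$; applying Theorem \ref{thm_key} to $(F_P, F_U, F_\gp)$ produces a point over $F_P \cap F_U$, which via the standard HHK identification coincides with the local field of the merged patching datum where $P$ is absorbed into $U$. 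Induct on $|\cP'|$ down to the base case.

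For part (2), fix a star-stable subset $\Delta' \subset \Delta(G_{F_s})$. For any overfield $L/F$, the variety $\Par_{\Delta'}(G)$ admits an $L$-rational point if and only if $G_L$ has an $L$-parabolic of type $\Delta'$, which by the Tits-index dictionary is equivalent to the appropriate inclusion relation between $\Delta'$ and $\Delta_0(G_L)$. Applying (1) to $Z = \Par_{\Delta'}(G)$ gives that $\Par_{\Delta'}(G)(F) \neq \emptyset$ if and only if $\Par_{\Delta'}(G)(F_v) \neq \emptyset$ for every $v$ in the patching system, and translating back characterizes $\Delta_0(G)$ as the minimum star-stable subset with the stated local compatibility. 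The main obstacle lies in the iterative merging step: one must verify that the analytic hypotheses of Lemma \ref{lem_dense0} (density and the module conditions \eqref{cond_I}, \eqref{cond_II}) propagate through each merge, and that the intersections $F_P \cap F_U$ genuinely coincide with the local field of the merged patching datum, which amounts to a careful analysis of the completed subrings $\widehat R_{U \cup \{P\}}$ and their submodules $V, W$ at each intermediate step.
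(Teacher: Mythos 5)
Your overall strategy (Weil restriction down to $\PP^1_T$, then the two-field patching of Theorem \ref{thm_key} in the setting validated by Lemma \ref{lem_dense0}) is the right one, and your treatment of part (2) matches the paper's. However, the inductive ``merging'' step you propose for $|\cP'|>1$ is a genuine gap, and you have in fact flagged the two unproved claims on which it rests: (a) that the density and module conditions \eqref{cond_I}, \eqref{cond_II} hold for an arbitrary pair $(F_P, F_U, F_\gp)$ on $\PP^1_T$ — Lemma \ref{lem_dense0} only establishes them for the specific configuration $P=\infty_k$, $U=\mathbb{A}^1_k$; and (b) that $F_P\cap F_U$ coincides with the local field of the ``merged'' datum $\widehat R_{U\cup\{P\}}$ — Theorem \ref{thm_key} only produces a point over the intersection $F_1\cap F_2$, and identifying that intersection with the next field in your induction is precisely the kind of simultaneous-factorization statement that needs proof. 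As written, the induction does not close.

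The paper sidesteps the induction entirely by choosing the finite morphism more carefully: following \cite[Thm.~4.2]{HHK2}, one takes $f:\gX\to\PP^1_T$ with $\cP=f^{-1}(\infty_k)$, i.e.\ \emph{all} of $\cP$ sits over the single point $\infty_k$. Then on $\PP^1_T$ one is immediately in the base case $\cP'=\{\infty_k\}$, $\cU'=\{\mathbb{A}^1_k\}$ of Lemma \ref{lem_dense0}, and the local data over $\gX$ are recovered through the product decompositions $\uZ(F_1)=Z(F_1\otimes_{\uF}F)=\prod_{P\in\cP}Z(F_P)$ and $\uZ(F_2)=\prod_{U\in\cU}Z(F_U)$ of \cite[Lemma~6.2(a)]{HH}. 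A single application of Theorem \ref{thm_key} to the diamond $(\uF,F_1,F_2,F_0)$ then yields $Z(F)\neq\emptyset$, with no iteration and no need to verify the analytic hypotheses beyond the one configuration covered by Lemma \ref{lem_dense0}. To repair your argument you should either import this choice of $f$, or supply proofs of (a) and (b) for every intermediate configuration — the former is much the shorter path.
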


 The recollection for star action and Tits index is done in the beginning of the paper.
 
 \begin{proof}
 (1)  We use a Weil restriction argument as in the proof of \cite[Thm. 4.2]{HHK2}.
This involves a finite morphism $f: \gX \to \PP^1_T$ such that 
$\cP=f^{-1}(\infty_k)$. Write $\uF$ for the function field of  $\PP^1_T$, and let $d=[F:F']$. We put
$\uU= \PP^1_k \setminus \{\infty \}$, $\uP= \infty_k$ and
$\underline{\gp}= (U, \cp)$ and
  $F_0=\uF_{\underline{\gp}}$, $F_1=\uF_{\uP}$ and $F_2=F_{\uU}$.  Also patching holds for the diamond $(\uF, F_1,F_2,F_0)$ according to \cite[thm 3.9]{HH}
  so in particular $K(x)= \uF= F_1 \cap F_2 \subset F_0$. 
  We put $V= F_1 \cap \widehat R_\cp$ and   $W= F_2 \cap \widehat R_\cp$.
  Lemma \ref{lem_dense0} shows that 
  $F_1$ is dense in $F_0$ and that  $V$ and $W$ satisfy conditions \eqref{cond_I} and \eqref{cond_II}.

  We consider the Weil restriction $\uG= R_{F/\uF}(G)$, it is a reductive $\uF$--group
which acts on the $\uF$--variety $\uZ=R_{F/\uF}(Z)$.
We have $$
\uZ(F_1)= Z(F_1 \otimes_{\uF} F)  = \prod_{P \in \cP} Z(F_P)
$$
according to \cite[Lemma 6.2.(a)]{HH}.
Similarly we have
$$
\uZ(F_2)= Z(F_2 \otimes_{\uF} F)  = \prod_{U \in \cU} Z(F_U)
$$
Our assumptions imply that $\uZ(F_1) \not = \emptyset$ and 
$\uZ(F_2) \not = \emptyset$. Theorem \ref{thm_key} implies 
that $\uZ(\uF) \not = \emptyset$. Thus $\uZ(\uF)= Z(F)$ is non-empty.

  \smallskip  
  
  \noindent (2) Let $\Theta$ be the smallest subset of $\Delta(G_{F_s})$
 which is stable under the $\star$--action of $\Gal(F_s/F)$ and such that 
  $\Delta_0(G) \subset \Delta_0(G_{F_U})$ for each $U \in \cU$ and  $\Delta_0(G) \subset \Delta_0(G_{F_P})$ for 
  each $P \in \cP$.
  We observe that $\Delta_0(G) \subset \Theta$ since it is 
   stable under the star action and 
  satisfies $\Delta_0(G) \subset \Delta_0$ for each $U \in \cU$ and  $\Delta_0(G) \subset \Delta_0(G_{F_P})$ for  each $P \in \cP$.
  For the converse inclusion we consider the $F$--variety $Z$ of parabolic
  subgroups of type $\Theta$.
  For each $U \in \cU$, we have  $\Theta \subset \Delta_0(G_{F_U})$ so that
  $Z(F_U) \not = \emptyset$; similarly we have 
   $\Theta \subset \Delta_0(G_{F_P})$ for each $P \in \cP$
   so that $Z(F_P) \not = \emptyset$.
  Part (1) yields that $Z(F) \not = \emptyset$. Thus $\Theta \subset \Delta_0(G)$
  and $\Theta = \Delta_0(G)$
 \end{proof}

 \begin{scorollary} \label{cor_main_hhk} Let $G$ be a reductive $F$--algebraic group.
 
 \smallskip
 
 \noindent (1) Let $Z$ be a twisted flag projective $F$--variety for $G$.
 Then $Z(F) \not= \emptyset$ if and only if  $Z(F_P) \not = \emptyset$ for 
 each $P \in Y$.
 
 \smallskip
 
 \noindent (2) For each $P \in Y$, 
 we fix an $F$--embedding  $i_P: F_s \to F_{P,s}$
 providing identifications 
  $\Delta(G_{F_s}) \simlgr \Delta(G_{F_{P,s}})$.
 The Tits index $\Delta_0(G)$ is the smallest subset of $\Delta(G_{F_s})$
 which is stable under the $\star$--action of $\Gal(F_s/F)$ and such that 
$\Delta_0(G) \subset \Delta_0(G_{F_P})$ for  each $P \in Y$.
 
 \end{scorollary}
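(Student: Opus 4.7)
My plan is to reduce to Theorem~\ref{thm_main_hhk}(1) via a spreading-out step; the forward direction is immediate by base change. The key auxiliary statement I will need is: for each irreducible component $V$ of $Y$ with generic point $\eta_V$, the hypothesis $Z(F_{\eta_V}) \neq \emptyset$ yields a nonempty Zariski open $U_V \subset V$, not meeting the other components, with $Z(F_{U_V}) \neq \emptyset$.  Granted this, I take $\cP$ to be the (finite) union of the nodes of $Y$ with $\bigsqcup_V (V \setminus U_V)$.  Every connected component $U \in \cU$ of $Y \setminus \cP$ is then contained in some $U_V$, so the inclusion $R_{U_V} \subset R_U$ gives $F_{U_V} \subset F_U$ and hence $Z(F_U) \neq \emptyset$; combined with the hypothesis at the points of $\cP$, Theorem~\ref{thm_main_hhk}(1) yields $Z(F) \neq \emptyset$.

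For the spreading-out step I fix a projective model $\gZ \to \gX$ of $Z$.  By the valuative criterion of properness applied to the DVR $\widehat R_{\eta_V}$, the given $F_{\eta_V}$-point of $Z$ extends to a section $\widetilde z \colon \Spec \widehat R_{\eta_V} \to \gZ$; reducing modulo $t$ yields a $\kappa(\eta_V)$-point of the fiber $\gZ_{\eta_V} = \gZ \times_\gX \Spec \kappa(\eta_V)$, and standard spreading-out in finite-type algebraic geometry produces a section $s \colon U \to \gZ|_U$ over a nonempty Zariski open $U \subset V$ not meeting the other components.  To lift $s$ to a $\widehat R_U$-point of $\gZ$---which yields the desired $F_U$-point of $Z$ upon inverting $t$---I plan to invoke Hensel's lemma for the $t$-adically complete ring $\widehat R_U$, whose reduction modulo $t$ equals $\Gamma(U, \cO_Y)$.

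The hard part will be arranging smoothness of $\gZ \to \gX$ along the image of $s$, as required by Hensel.  Smoothness of $Z$ over $F$ ensures smoothness of $\gZ \to \gX$ only over the generic point of $\gX$, and for an arbitrary projective model the smooth locus need not meet the fiber over $\eta_V$.  Here I would exploit the twisted flag structure: spread $G$ to a reductive group scheme $\gG$ over a dense open $\gX^\circ \subset \gX$, and, when $\eta_V \in \gX^\circ$, take $\gZ = \Par_I(\gG)$, which is smooth projective over $\gX^\circ$; shrinking $U$ so that $s(U) \subset \gZ|_{\gX^\circ}$ makes the Hensel lift available.  When $G$ has bad reduction at $\eta_V$, one needs to work harder---for instance via Artin approximation on the Henselization $R_{\eta_V}^h$, combined with an \'etale-to-Zariski descent using the rigidity of the Tits index---and this is the principal technical obstacle I foresee.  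Finally, part~(2) follows formally from part~(1): letting $\Theta$ be the smallest Galois-stable subset of $\Delta(G_{F_s})$ containing each $\Delta_0(G_{F_P})$ for $P \in Y$, the $F$-variety $\Par_\Theta(G)$ has $F_P$-points at every such $P$ by construction, so part~(1) forces $\Par_\Theta(G)(F) \neq \emptyset$ and thus $\Theta \supset \Delta_0(G)$; the reverse inclusion is immediate from base change.
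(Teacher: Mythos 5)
Your overall architecture is exactly the paper's: reduce to Theorem~\ref{thm_main_hhk}(1) by producing, for each irreducible component $V$ of $Y$ with generic point $\eta_V$, a nonempty open $U_V\subset V$ (disjoint from the other components) with $Z(F_{U_V})\neq\emptyset$, then take $\cP$ to be the complement of $\bigcup_V U_V$. Part (2) is also handled the same way (the minimality argument via $\Par_\Theta(G)$ is the one already used in Theorem~\ref{thm_main_hhk}(2)). The difference is that the paper obtains the key spreading-out statement by citing \cite[prop.~5.8]{HHK2}, whereas you attempt to prove it from scratch and, by your own account, do not finish.

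That unfinished step is a genuine gap, not a routine verification. Your Hensel-lifting strategy needs $\gZ\to\gX$ to be smooth along the image of the section $s\colon U\to\gZ|_U$, and your only mechanism for arranging this is to spread $G$ out to a reductive group scheme $\gG$ over a dense open $\gX^\circ$ and take $\gZ=\Par_I(\gG)$. But a dense open of the two-dimensional scheme $\gX$ can miss the generic point of every component of $Y$ (its complement may contain whole components of the closed fibre), and there is in general no reductive model of $G$ near $\eta_V$ --- think of $\mathbf{PGL}_1(D)$ for a quaternion algebra $D$ ramified along $V$, which is precisely the kind of example the local-global principle is about. So the ``bad reduction'' case is the typical one, and the Artin-approximation/\'etale-descent remedy you gesture at is not carried out. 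To close the argument you should either supply a proof of the auxiliary lemma in full generality or, as the paper does, invoke \cite[prop.~5.8]{HHK2}, which is stated for arbitrary smooth projective (indeed more general) $F$--varieties and requires no good-reduction hypothesis.
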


 \begin{proof}
 (1)  We assume $Z(F_P) \not = \emptyset$ for 
 each $P \in Y$. Let $Y_1,\dots, Y_d$ be the irreducible components of $Y$
 with respective generic points $\eta_1, \dots,  \, \eta_d$. 
 According to  \cite[prop. 5.8]{HHK2}, there exists non-empty affine subsets 
 $U_i \subset Y_i$ ($i=1,\dots, d$) such that $Z(F_{U_i}) \not = \emptyset$
 for $i=1,\dots, d$ and $U_i \cap U_j = \emptyset$ for $i<j$.
 We apply Theorem \ref{thm_main_hhk} to  $\cU= \{ U_1,\dots, U_d\}$, 
 $\cP= Y \setminus \cap_i U_i$ and get that $Z(F) \not = \emptyset$.
 
 \smallskip
 
 \noindent (2) This readily follows of (1).
  
 \end{proof}

 \begin{scorollary} \label{cor_main_hhk2} Let $G$ be a reductive $F$--algebraic group
 and assume that $G$ is the generic fiber of a reductive $\gX$-group scheme $\gG$.
 
 \smallskip
 
 \noindent (1) Let $Z$ be a twisted flag projective $F$--variety for $G$.
 Then $Z(F) \not= \emptyset$ if and only if  $Z(F_v) \not = \emptyset$ for 
 each discrete valuation $v$ of $F$.
 
 \smallskip
 
 \noindent (2) For each discrete valuation $v$ of $F$, 
 we fix an $F$--embedding  $i_v: F_s \to F_{v,s}$
 providing identifications 
  $\Delta(G_{F_s}) \simlgr \Delta(G_{F_{v,s}})$.
 The Tits index $\Delta_0(G)$ is the smallest subset of $\Delta(G_{F_s})$
 which is stable under the $\star$--action of $\Gal(F_s/F)$ and such that 
$\Delta_0(G) \subset \Delta_0(G_{F_v})$ for  each discrete valuation $v$ on $F$.
 \end{scorollary}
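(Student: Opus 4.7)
The plan is to reduce (1) to showing $Z(F_P)\neq\emptyset$ for each closed $P\in Y$ via Corollary~\ref{cor_main_hhk}, and then to exploit the extension $\gG$ together with a blowup at $P$ to produce a $\kappa(P)$-rational parabolic of type $I$.

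Since reductiveness is preserved under pullback and the function field $F$ is preserved, I would first replace $\gX$ by a regular birational projective model, using resolution of singularities for excellent two-dimensional normal schemes; this modifies neither the hypothesis nor the conclusion, because the set of discrete valuations of $F$ is intrinsic to $F$. Corollary~\ref{cor_main_hhk}(1) then reduces the problem to showing $Z(F_P)\neq\emptyset$ for every $P\in Y$. When $P$ is a generic point of an irreducible component of $Y$, the field $F_P$ is the completion of $F$ at a discrete valuation and the hypothesis applies immediately. The genuine work is at each closed point $P\in Y$.

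Fix such a closed $P$ and set $\gZ:=\Par_I(\gG)$, a smooth projective $\gX$-scheme with generic fiber $Z$. It suffices to produce a $\widehat R_P$-point of $\gZ$; since $\widehat R_P$ is henselian local and $\gZ$ is smooth over it, Hensel's lemma reduces this to producing a $\kappa(P)$-point of $\gZ_{\kappa(P)}=\Par_I(\gG_{\kappa(P)})$, equivalently a $\kappa(P)$-defined parabolic subgroup of $\gG_{\kappa(P)}$ of type $I$. To construct one, I blow up $\gX$ at $P$: since $P$ is a regular point of $\gX$, the exceptional divisor is $E\cong\mathbb P^1_{\kappa(P)}$ and its generic point gives a discrete valuation $v_E$ of $F$ with residue field $\kappa(v_E)=\kappa(P)(s)$. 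The hypothesis provides $Z(F_{v_E})\neq\emptyset$; the valuative criterion of properness applied to $\gZ$ over the complete DVR $\widehat{\cO}_{v_E}$ together with Hensel's lemma upgrade this to a $\kappa(P)(s)$-point of $\gZ_{\kappa(P)}$. Using that $\gZ_{\kappa(P)}$ is projective over $\kappa(P)$, the valuative criterion on the DVR $\kappa(P)[s]_{(s)}$ extends this rational point to a section of $\gZ_{\kappa(P)}\to\Spec\kappa(P)[s]_{(s)}$, and specializing at $s=0$ delivers the required $\kappa(P)$-point.

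Part (2) will then follow from (1) by the same formal pattern as in the proof of Theorem~\ref{thm_main_hhk}(2), applied to the varieties $\Par_\Theta(G)$ for Galois-stable subsets $\Theta\subset\Delta(G_{F_s})$. The main obstacle I anticipate is the preliminary reduction to a regular model: without regularity of $\gX$ at $P$ the exceptional divisor of the blowup need not be $\mathbb P^1_{\kappa(P)}$, so both the identification $\kappa(v_E)=\kappa(P)(s)$ and the clean specialization at $s=0$ would break down; once regularity is arranged, the remainder of the argument is a standard chain of applications of Hensel's lemma and the valuative criterion.
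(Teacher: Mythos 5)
Your argument is correct, and it reaches the crucial point --- a $\kappa(P)$-rational point of the special fiber of $\gZ$ at a closed point $P$ --- by a genuinely different route from the paper. The paper works on the given normal model without blowing up: for a closed point $P$ it picks an irreducible component $D$ of $Y$ through $P$, uses the hypothesis at the generic point of $D$ (a codimension-one point of $\gX$) to get $\gZ(\widehat R_Q)\neq\emptyset$ by properness, reduces to a $k(D)$-point of $\gZ_k$, extends it to a $D$-point by projectivity, and specializes at $P$ before invoking Hensel. You instead pass to a regular model, blow up at $P$, and use the divisorial valuation $v_E$ of the exceptional $\mathbb P^1_{\kappa(P)}$; specializing along $\kappa(P)[s]_{(s)}$ then lands you at a $\kappa(P)$-point. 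The trade-offs: the paper's proof only consumes the valuations attached to codimension-one points of the fixed model $\gX$ (so it proves a formally sharper local-global statement), whereas you need valuations that only appear after a blowup; on the other hand your exceptional divisor is automatically regular, so your specialization step is unconditional, while the paper's extension of a $k(D)$-point to a $D$-point tacitly requires the local ring of $D$ at $P$ to be a DVR. One step you assert without justification, and which the paper isolates as a separate Claim, is that $Z$ really is the generic fiber of an $\gX$-scheme $\Par_I(\gG)$: for this you must check that the type $I$, a priori only $\Gal(F_s/F)$-stable, is stable under the action of $\Pi_1(\gX,\bullet)$ on $\Delta_0$, which holds because $\gX$ is normal and connected so that $\Gal(F_s/F)\to\Pi_1(\gX,\bullet)$ is surjective. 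You should add that sentence; with it, your proof is complete.
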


 \begin{proof}
 (1) We assume that $Z(F_v) \not = \emptyset$ for 
 each discrete valuation $v$ of $F$.
 Let $\gG$ be a reductive $\gX$--group of generic fiber $G$.
 Without loss of generality we can assume that 
$\gG$ is adjoint. Let $\gG_0$ be the Chevalley form of $\gG$ and  let $(\gB_0,\gT_0)$ be a 
   Killing couple for  $\gG_0$ and let $\Delta_0$ be
   the associated Dynkin diagram.
   Since $\gG_0$ is adjoint, we have 
   an exact sequence of $\ZZ$--group schemes
   \cite[XXIV.1.3 and 3.6]{SGA3}
   $$
   1 \to \gG_0 \to \Aut(\gG_0) \to \Aut(\Delta_0) \to 1 
   $$
The $\Aut(\gG_0)$--torsor
   $\gQ=\mathrm{Isom}(\gG_0,\gG)$ over $\gX$ defines then 
   an $\Aut(\Delta_0)$-torsor over $\gX$.
Since    $\Aut(\Delta_0)$ is a finite constant group,
Grothendieck's theory of fundamental groups tells us that 
   this torsor is the data of 
   a morphism $v: \Pi_1(\gX,\bullet) \to \Aut(\Delta_0)$
   where the base point is $\Spec(F_s) \to \Spec(F) \to \gX$.  
The star action associated to $G$ is the composite
$$
\Gal(F_s/F) \to \Pi_1(\gX,\bullet) \to \Aut(\Delta_0).
$$

 \begin{sclaim}
 $Z$ is the generic fiber of an $\gX$--scheme of parabolic subgroups $\gZ$ of $\gG$. 
 \end{sclaim}
 
 The variety $Z$ is a form of the variety $\Par_I(G_0)$
  of parabolic subgroups of type $I$ where $I \subset \Delta_0$ is stable the star action.
Since $\gX$ is normal, the map  $\Gal(F_s/F) \to \Pi_1(\gX,\bullet)$ is onto so that $I$ is stable under the action of 
$\Pi_1(\gX,\bullet)$ on $\Delta_0$.
  In particular $Q$ admits a reduction $\gQ_I$ to the stabilizer 
   $\Aut_I(\gG_0)$ for the action  of $\Aut(\gG_0)$ on $\Delta_0$. The $\gX$--scheme  $\gZ=\, ^{\gQ_I} \! \! \Par_I(\gG_0)$
    is the scheme of parabolic subgroups of type $I$
    of $\gG= ^{\gQ_I} \! \! \gG_0$, so that $Z$
    is the generic fiber of $\gZ$.

 For applying Corollary \ref{cor_main_hhk}, we have to check
 that $Z(F_P) \not = \emptyset$ for each $P \in Y$.
 If $Q$ is a point of codimension $1$ of $\gX$, 
 it defines a discrete valuation $v_Q$ on $F$ whose completion is  $F_Q$.
 Our assumption implies then that  $Z(F_Q) \not = \emptyset$ in that case.
 We deal now with the case of a closed point $P$ of $\gX$.
 Let $D$ be an irreducible component of $Y=\gX_k$ containing $P$
 and let $Q$ be the generic point of $D$. Since 
 $\gZ$ is proper over $T$, we have $\gZ(\widehat R_Q)=\gZ(F_Q)$ which
 is not empty by the preceding case. It follows that $\gZ_k(k(D)) \not = \emptyset$.
 Again $\gZ_k$ is projective so that $\gZ_k(D) =\gZ_k(k(D))$ is not empty
 and in particular $\gZ_k(k(P))$ is not empty.
 Since $\gZ$ is smooth over $\gX$, the Hensel lemma
 shows that $\gZ_k(\widehat R_{\gX,P}) \to  \gZ_k(k(P))$ is surjective.
 Thus $\gZ(\widehat R_{\gX,P})$ is not empty and so is $Z(F_P)$.

 \smallskip
 
 \noindent (2) It readily follows of (1). 
  
 \end{proof}

 \begin{scorollary} \label{cor_main_hhk3} Let $G$ be a reductive $F$--algebraic group.
 We denote by $\Omega^1_F$ the set of rank one valuations of $F$.
 
 \smallskip
 
 \noindent (1) Let $Z$ be a twisted flag projective $F$--variety for $G$.
 Then $Z(F) \not= \emptyset$ if and only if  $Z(F_v) \not = \emptyset$ for 
 each $v \in \Omega^1_F$.
 
 \smallskip
 
 \noindent (2) For each $v \in \Omega^1_F$, 
 we fix an $F$--embedding  $i_v: F_s \to F_{v,s}$
 providing identifications 
  $\Delta(G_{F_s}) \simlgr \Delta(G_{F_{v,s}})$.
 The Tits index $\Delta_0(G)$ is the smallest subset of $\Delta(G_{F_s})$
 which is stable under the $\star$--action of $\Gal(F_s/F)$ and such that 
$\Delta_0(G) \subset \Delta_0(G_{F_v})$ for  each $v \in \Omega^1_F$.
 
 \end{scorollary}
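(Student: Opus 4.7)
The forward direction is trivial by base change. For the converse in (1), my plan is to apply Corollary \ref{cor_main_hhk}(1): it suffices to exhibit a normal projective model $\gX$ of $F$ over $T$ with $Z(F_P) \not = \emptyset$ for every $P \in Y$. I would take $\gX$ regular, using Lipman's resolution of singularities for excellent two-dimensional schemes, so that at every closed point the ring $\widehat R_P$ is regular.

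When $P$ is the generic point of an irreducible component of $Y$, the local ring $\cO_{\gX,P}$ is a discrete valuation ring and $F_P$ is the completion of $F$ at a rank-one discrete valuation $v_P \in \Omega^1_F$, so $Z(F_P) = Z(F_{v_P}) \not = \emptyset$ by hypothesis.

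The main work is the case of a closed point $P$ of $\gX$, where $F_P$ is not itself the completion at a rank-one place of $F$. My plan is to run the patching of Theorem \ref{thm_key} locally on $\Spec(\widehat R_P)$. Choose a regular system of parameters $(t,s)$ at $P$ with $t$ the uniformizer of $T$ and $s$ the equation of an irreducible component of $Y$ through $P$ (both then belong to $F$), and form the local diamond $(F_P, F_1, F_2, F_\gp)$ in which $F_1$ is the fraction field of the $t$-adic completion of $\widehat R_P[s^{-1}]$, $F_2$ is the fraction field of the $s$-adic completion of $\widehat R_P[t^{-1}]$, and $F_\gp$ is the fraction field of the completion of $(\widehat R_P)_\gp$ at a height-one prime $\gp$. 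Each of $F_1$, $F_2$, $F_\gp$ is a complete discretely valued field whose valuation restricts non-trivially to $F$ and therefore defines an element of $\Omega^1_F$; the hypothesis then gives $Z(F_1)$, $Z(F_2)$, and $Z(F_\gp)$ all non-empty, and Theorem \ref{thm_key} produces an $F_P$-point of $Z$. The main obstacle is verifying the patching framework of Section 3 in this local two-dimensional setting — in particular checking conditions \eqref{cond_I} and \eqref{cond_II} for the appropriate submodules $V, W \subset \widehat R_\gp$, together with the density hypothesis needed for Proposition \ref{prop_R_eq} — and dealing with closed points at which several branches meet, which requires iterating the patching over all branches simultaneously. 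This is exactly the local two-dimensional study announced for Section 5, using torsors on $\widehat R_P[t^{-1}, s^{-1}]$ and the loop-torsor techniques from \cite{Gi2}.

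Part (2) follows formally from (1) by the standard argument used in Theorem \ref{thm_main_hhk}(2) and Corollary \ref{cor_main_hhk}(2): let $\Theta \subset \Delta(G_{F_s})$ be the smallest $\star$-stable subset containing $\Delta_0(G_{F_v})$ for every $v \in \Omega^1_F$; applying (1) to $Z = \Par_\Theta(G)$ yields $Z(F) \not = \emptyset$, hence $\Delta_0(G) \subset \Theta$, while the reverse inclusion is immediate from the choice of $\Theta$.
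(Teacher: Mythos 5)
Your reduction to Corollary \ref{cor_main_hhk}(1), and your treatment of the codimension-one points of the model, match the paper's strategy. But the heart of the matter is the closed points $P$ of the special fiber, and there your argument has a genuine gap. The paper disposes of this step by quoting an external result, \cite[Thm.\ 2.5]{HHKP}, which asserts precisely that if $Z(F_v)\neq\emptyset$ for all $v\in\Omega^1_F$ then there is a regular proper model $\gX'$ with $Z(F_P)\neq\emptyset$ for every point $P$ of its closed fiber; the corollary then follows in two lines. You instead propose to prove this by running the patching Theorem \ref{thm_key} on a ``local diamond'' built from $\widehat R_P$, and you yourself flag that the hypotheses of that theorem --- conditions \eqref{cond_I} and \eqref{cond_II} for suitable $V,W$, the density of $F_1$ in $F_0$, and (crucially, though you do not mention it) the identity $F_P=F_1\cap F_2$ --- are ``the main obstacle'' and are left unverified. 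That is not a proof: Theorem \ref{thm_key} is only established in the paper under the Section~3 framework, which is checked (via Lemma \ref{lem_dense0} and a Weil restriction to $\PP^1_T$) for the global diamonds $(F,F_P,F_U,F_\gp)$, not for your local one. Moreover your local setup is already ill-posed in general: at a closed point $P$ where the special fiber is singular or non-reduced, $t$ is \emph{not} part of a regular system of parameters of $\widehat R_P$ (if $t\in(s)$ then $t$ and $s$ cannot both be regular parameters), so the pair $(t,s)$ you start from need not exist, and your $F_1$, $F_2$ collapse essentially to the branch fields $F_{(t)}$, $F_{(s)}$ rather than forming a patching diamond over $F_P$.

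A second, independent problem is your appeal to ``the local two-dimensional study of Section 5'' and the loop-torsor techniques of \cite{Gi2} to close the gap. That machinery (Proposition \ref{prop_patching}) is exactly what forces the hypothesis that $p$ does not divide the order of the automorphism group of the absolute root system, a hypothesis which Corollary \ref{cor_main_hhk3} does not carry --- the introduction explicitly advertises this corollary as unconditional. So importing Section 5 here would either be circular or would weaken the statement. To repair the proof you should either invoke \cite[Thm.\ 2.5]{HHKP} as the paper does, or supply a genuine unconditional argument for $Z(F_P)\neq\emptyset$ at closed points. Part (2) of your write-up is fine and matches the paper's (omitted) formal argument.
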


 \begin{proof}
 (1) We assume that $Z(F_v) \not = \emptyset$ for 
 each $v \in \Omega^1_F$.
  According to \cite[Thm 2.5]{HHKP}, there exists a regular proper model $\gX'$ of $F$
  such that $Z(F_P) \not = \emptyset$ for 
 each $P \in \gX' \times_T k$.  Then Corollary \ref{cor_main_hhk2}.(1)
 shows that $Z(F) \not = \emptyset$.
 
 \smallskip
 
 \noindent (2) This readily follows of (1).
 \end{proof}

\section{Local-global principle for discrete valuations}

Let $T$ be an excellent DVR of fraction field $K$ and residue field $k$.
Let $X$ be a smooth, projective,
geometrically integral curve over $K$. Let $F=K(X)$ be the function field of $X$.

\begin{slemma} \label{lem_extend} Let $\gX$ be a projective, flat curve over $A$
which is connected and regular such that $\gX_K=X$.
Let $\gH$ be a flat  affine $\gX$--group scheme of finite presentation
and assume that  there  exists a  Zariski cover $(\gU_i)_{i \in I}$ of $\gX$ 
 such that each $\gH_{U_i}$ admits a closed embedding $\gH_{U_i} \subset \GL_{n_i}$ such that 
 $\GL_{n_i}/\gH_{U_i}$ is representable by an affine $U_i$--scheme,

Let $\gamma \in H^1_{\fppf}(F,\gH)$ and let 
 $D$ be a divisor of $\gX$ which contains the irreducible
components of $Y= \gX_k$ and such that $\gamma$ extends \textit
to $X \setminus D_K$.

Then there exists a proper birational morphism $q: \gX' \simlgr \gX$  
such that $\gX'$ is a regular proper model of $X$ and 
such that $D'=q^*D$ is  a strict normal crossing divisor  and  
$$
\gamma \in \mathrm{Im}\Bigl( H^1(\gX' \setminus D', \gH) \to H^1(F, \gH) \Bigr).
$$
\end{slemma}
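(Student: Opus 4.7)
My plan reduces the lemma to a single invocation of embedded resolution of singularities. The crucial observation is that since $D$ contains every irreducible component of the closed fibre $Y = \gX_k$, the support of $D$ contains all of $Y$, whence $\gX \setminus \mathrm{supp}(D) \subset \gX \setminus Y = X$; a direct check then gives $\gX \setminus \mathrm{supp}(D) = X \setminus D_K$. Therefore, once I produce a resolution $q : \gX' \to \gX$ whose blow-up centres all lie in $\mathrm{supp}(D)$, the open $\gX' \setminus D'$ will be canonically isomorphic to $X \setminus D_K$, and the hypothesis that $\gamma$ extends to $X \setminus D_K$ will transfer across this isomorphism to produce the required class on $\gX' \setminus D'$.

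First I would apply embedded resolution for divisors on a regular excellent two-dimensional scheme. Since $T$ is excellent, so is $\gX$; by Lipman's theorem, complemented by the embedded version for excellent surfaces of Cossart--Jannsen--Saito, there exists a finite sequence of blow-ups at closed points whose composite $q : \gX' \to \gX$ is proper birational, with $\gX'$ regular (blowing up a closed point of a regular surface preserves regularity) and $D' := q^* D$ a strict normal crossing divisor. Each blow-up is centred at a closed point lying on the successive total transform of $D$, so every centre maps into $\mathrm{supp}(D) \subset \gX$; consequently $q$ restricts to an isomorphism $q^{-1}(\gX \setminus \mathrm{supp}(D)) \simlgr \gX \setminus \mathrm{supp}(D)$, and the supports of $D' = q^*D$ and $q^{-1}(\mathrm{supp}(D))$ coincide.

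Combining the two preceding steps yields a canonical isomorphism $\gX' \setminus D' \simlgr X \setminus D_K$ under which the pullback $q^* \gH$ matches $\gH|_{X \setminus D_K}$. The hypothesis furnishes a class in $H^1_{\fppf}(X \setminus D_K, \gH)$ whose image in $H^1_{\fppf}(F, \gH)$ is $\gamma$; transferring this class along the isomorphism gives a class in $H^1_{\fppf}(\gX' \setminus D', \gH)$ mapping to $\gamma$, which is exactly the required conclusion.

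The only non-trivial ingredient is embedded resolution in the excellent, possibly mixed-characteristic two-dimensional setting; the main point requiring attention is that the resolution algorithm must keep all blow-up centres inside the successive total transform of $D$, which is the standard convention for embedded resolution of divisors since the non-sncd locus is always contained in the divisor itself. I expect no serious obstacle beyond this bookkeeping. The Zariski-local $\GL_{n_i}$-embedding hypothesis on $\gH$ plays no role in the present argument; it is presumably maintained for compatibility with subsequent statements that invoke purity or patching for torsors on $\gX'$.
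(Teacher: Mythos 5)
Your argument is correct for the statement as written, but it takes a genuinely different and substantially shorter route than the paper's. The paper never uses the identification $\gX \setminus \mathrm{supp}(D) = X \setminus D_K$; instead it (i) invokes a passage-to-the-limit argument to spread $\gamma$ out to a small affine open $\gU_1 \subset \gX \setminus D$, (ii) applies an extension result of Gille--Pianzola to enlarge the domain of definition to an open $\gU_2$ containing $\gU_1$ and $X \setminus D_K$, (iii) applies the purity theorem of the appendix (Theorem \ref{thm_extend}) to pass from $\gU_2$ to all of $\gX \setminus D$, and only then (iv) resolves via Lipman's theorem exactly as you do. Steps (i)--(iii) are where the Zariski-local $\GL_{n_i}$-embedding hypothesis is consumed (it is the hypothesis of case (ii) of the purity theorem), so your remark that this hypothesis plays no role is accurate for your route but not for the paper's. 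The noteworthy point is that under the literal hypothesis that $\mathrm{supp}(D)$ contains every irreducible component of $Y$, steps (i)--(iii) are indeed vacuous: $\gX \setminus D$ then coincides with the curve $X \setminus D_K$ and has no codimension-two points over which to apply purity. The paper's machinery only acquires content if $D$ omits some vertical component, which suggests the authors were guarding against a more general $D$ than the one actually hypothesized. Your approach buys economy and independence from purity; the paper's buys robustness should the hypothesis on $D$ be relaxed. Both arguments share the final resolution step, and your bookkeeping that all blow-up centres lie in $\mathrm{supp}(D)$ is correct and in fact automatic, since every closed point of $\gX$ lies in $Y \subseteq \mathrm{supp}(D)$.
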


\begin{proof} Using a passage to the limit argument \cite{Mg}, there exists an open affine subscheme 
$\gU_1 \subset \gX \setminus D$
such that $\gamma$ extends to a class $\gamma_1 \in H^1(\gU_1, \gH)$.
According to \cite[cor 1.8]{GP1}, $\gamma_1$ extends to a class
$\gamma_2 \in H^1(\gU_2, \gH)$ where $\gU_2$ is an open subscheme of $\gX  \setminus D$
containing $\gU_1$ and $X \setminus D_K$. 
By purity (i.e. Theorem \ref{thm_extend} of the appendix \ref{app_extend}),
we have $H^1(\gX \setminus D, \gH) = H^1( \gU_2, \gH)$.
Thus $\gamma$ extends over  $\gX \setminus D$.
According to Lipman's theorem we can resolve the singularities of $\gX$
and transform $D$ in a  strict normal crossing divisor, see \cite[lemma 4.7]{HHK}.
\end{proof}

\begin{sproposition} \label{prop_patching}
 Let $G$ be a reductive $F$--group  and assume that
 $p$ does not divide the order of the automorphism group of
 the  absolute root system of $G_{ad}$.
 Let $Z$ be a twisted flag  variety of $G$.
We assume that $Z(F_v) \not = \emptyset$ for 
all discrete valuations of $F$ arising from models of $X$.
Then there exists a regular proper model $\gX$ of $X$ 
with special fiber $Y=\gX_k$ such that for every point
$y \in Y$, then $Z(F_y) \not = \emptyset$.
 \end{sproposition}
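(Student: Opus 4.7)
The plan is to start from any regular proper model of $X$ and refine it by blowups so that the locus where the torsor classifying $G$ (equivalently $Z$) fails to extend reductively is a strict normal crossing divisor, and then to verify $Z(F_y)\neq\emptyset$ at every closed point $y$ of the special fiber, treating points outside the bad divisor via the going-down argument of Corollary \ref{cor_main_hhk2} and points on it via the two-dimensional local analysis announced for section~5.

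Concretely, I would write $G$ as a twist of a Chevalley form $G_0$ by a cocycle $\gamma\in H^1(F,\Aut(G_0))$, so that $Z\cong{}^{\gamma}\Par_I(G_0)$ for some star-stable $I\subset\Delta(G_{0})$. Spreading $G_0$ and $\Par_I(G_0)$ over an initial regular proper model $\gX_0$ of $X$, Lemma \ref{lem_extend} together with further blowups produces a regular proper model $\gX$ and an SNC divisor $D\subset\gX$ such that $\gamma$ extends to a class in $H^1(\gX\setminus D,\Aut(G_0))$; twisting then yields a smooth projective scheme $\gZ$ of parabolic subgroups over $\gX\setminus D$ with generic fiber $Z$.

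For a closed point $y\in Y\setminus D$, $\gZ$ is smooth projective over $R=\widehat R_{\gX,y}$. Choosing the generic point $\eta$ of any component of $Y$ through $y$, the valuation at $\eta$ arises from the model $\gX$, so $Z(F_\eta)\neq\emptyset$ by hypothesis. The valuative criterion of properness produces a point of $\gZ(\widehat R_{\gX,\eta})$, hence a $k(\eta)$-point of the special fiber; projectivity of the fiber spreads this to a $k(y)$-point, and Hensel's lemma lifts it to an $R$-section of $\gZ$, so $Z(F_y)\neq\emptyset$. This mirrors the argument of Corollary \ref{cor_main_hhk2}.

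For a closed point $y\in Y\cap D$, the ring $R=\widehat R_{\gX,y}$ is a two-dimensional complete regular local ring, $D$ is cut out near $y$ by a single regular parameter $t$ or, at a crossing, by two regular parameters $s,t$, and $\gamma$ lives on $\Spec(R[t^{-1}])$ or $\Spec(R[s^{-1},t^{-1}])$ — precisely the local setup for section~5. The hypothesis that $p$ does not divide the order of the automorphism group of the absolute root system of $G_{ad}$ allows one to describe $\gamma$ as a loop torsor in the sense of \cite{Gi2}; combined with the input $Z(F_v)\neq\emptyset$ at the discrete valuations corresponding to the branches of $D$ at $y$, this yields $Z(F_y)\neq\emptyset$, if necessary after an additional blowup of $\gX$ at $y$ that separates a crossing into disjoint branches and feeds back into the non-crossing case already treated. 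The main obstacle is precisely this last step: the simple going-down argument breaks down at crossings because there is no ambient smooth proper $R$-scheme to feed into Hensel, and one must instead exploit the loop structure, for which the hypothesis on $p$ is essential; this is why the heart of the argument must be deferred to the local analysis of section~5.
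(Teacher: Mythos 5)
Your overall strategy (spread $G$ out over a model, arrange via Lemma \ref{lem_extend} that the obstruction is concentrated on a strict normal crossing divisor $D$, handle points off $D$ by properness and Hensel's lemma, and invoke loop torsors at points of $D$) matches the paper's in outline, but there is a genuine gap exactly at the step that constitutes the content of the proposition. A first, structural point: Lemma \ref{lem_extend} requires $D$ to contain all irreducible components of $Y$, so in the construction every closed point of $Y$ lies on $D$; there is no ``good locus'' $Y\setminus D$, and all closed points must be treated by the two-dimensional local argument. Second, and more seriously, for a closed point $P$ on $D$ you assert that the loop structure ``combined with the input $Z(F_v)\neq\emptyset$ at the branches of $D$ at $P$'' yields $Z(F_P)\neq\emptyset$, possibly after a blowup that ``feeds back into the non-crossing case''. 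This is not an argument: blowing up at $P$ replaces $P$ by an exceptional curve all of whose closed points still lie on the enlarged divisor, so nothing is reduced to a previously treated case; and deferring ``the heart of the argument'' to ``the local analysis of section 5'' is circular, since Proposition \ref{prop_patching} \emph{is} that local analysis.

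What the paper actually does at a closed point $P$ is the following, and none of it appears in your proposal. One first reduces the structure group: by \cite[Th.\ 1.1(b)]{CGR} the $\Aut_I(G_0)$--torsor $Q_I$ reduces to a torsor under a finite flat $\ZZ$--subgroup $S_0\subset \Aut_I(G_0)$, and the hypothesis on $p$ enters precisely here --- it guarantees that $S_{0,T}$ is finite \'etale of order prime to $p$, so that every $S_0$--torsor over the localization $A_D$ of $A=R_P$ at $D$ is a loop torsor and $\gG\times_{\gX\setminus D}A_D$ is a loop reductive group scheme. Your proposal never performs this reduction, and without it the claim that $\gamma$ ``is a loop torsor'' is unsupported (the lemma on extending torsors is also applied to $S_0$, which satisfies its embeddability hypotheses, not to $\Aut(G_0)$ directly). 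One then feeds into \cite[th.\ 4.1, (iii)$\Rightarrow$(i)]{Gi2} not the branch valuations you propose, but the valuation $v$ of $F_P$ attached to the exceptional divisor of the blow-up of $\Spec(\widehat R_P)$ at its closed point --- a valuation arising from a model of $X$, hence covered by the hypothesis --- and that theorem converts $Z(F_{P,v})\neq\emptyset$ into the existence of a parabolic subgroup of type $I$ for the loop group scheme over $A_D$, whence over $F_P$. Supplying this reduction of structure group and the correct local input from \cite{Gi2} is exactly what is missing from your proof.
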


 \begin{proof}
 Without loss of generality we can assume that $G$ is adjoint.
   Let $G_0$ be the Chevalley form of $G$ and 
   let $(B_0,T_0)$ be a 
   Killing couple for  $G_0$ and let $\Delta_0$ be
   the associated Dynkin diagram. 
 The variety $Z$ is a form of the variety $\Par_I(G_0)$
  of parabolic subgroups of type $I$ where $I \subset \Delta_0$ is stable
   under the star action defined by the $\Aut(G_0)$--torsor
   $Q=\mathrm{Isom}(G_0,G)$.  
   In particular $Q$ admits a reduction $Q_I$ to the stabilizer 
   $\Aut_I(G_0)$ for the action  $\Aut(G_0)$ on $\Delta_0$ through
the morphism $\Aut(G_0) \to \Out(G_0)\simlgr \Aut(\Delta_0)$.
Furthermore $Z$ is isomorphic to  $^{Q_I} \! \Par_I(G_0)$.
   
We apply  now Theorem 1.1.(b) of \cite{CGR}
to the $\ZZ$--group scheme $\Aut_I(G_0)$. 
It provides a finite $\ZZ$--subgroup $S_0$ of $\Aut_I(G_0)$ such that the map $H^1(F, S_0) \to H^1(F, \Aut(G_0))$ is onto.
Furthermore the construction of $S_0$ is explicit in the proof,
it is an extension of the finite constant group 
 $\Aut_I(G_0)/T_0$ by a finite subgroup of $T_0$.
 In particular $S_0$ is finite  free over $\ZZ$  and our assumption on the characteristic implies that 
$S_{0,T}$ is finite \'etale of rank prime to $p$. 
   It follows that $Q_I$ admits a reduction to an $F$--torsor 
   $E$ under $S_0$.

   The  finite flat $\ZZ$--group scheme $S_{0,A}$ admits 
   a faithful representation $S_0 \hookrightarrow \GL_{N,\ZZ}$
   \cite[\S 1.4.5]{BT} and the quotient 
   $\GL_{N,\ZZ} /S_0$ is representable by an affine $\ZZ$-scheme \cite[\S III.2.6]{DG}.

  According to Lemma \ref{lem_extend},
  there exists  a regular proper model $\gX$ of $X$ and 
  a strict normal crossing divisor  $D$ containing  the irreducible components of $Y$ such that $E$ extends 
  to a $\gX \setminus D$--torsor $\gE$  under $S_0$.
  We put $\gQ_I= \gE \wedge^{S_0}\Aut_I(G_0)$ and consider
  the $\gX \setminus D$-group scheme $\gG= {^{\gQ_I}\!G_0}$ of generic fiber $G$.
  
  We are given a closed point $P \in Y$. If $y$ is of codimension one, by hypothesis, $Z(F_y)$ is not empty. We therefore look at a closed point $P \in Y$.
 We are given a point $P \in Y$ and pick a height one prime $\cp$ in $\widehat R_P$ that contains
  $t$. It  defines a branch of $Y$ at $P$ lying 
 on some irreducible component $Y_1$ of $Y$.

We consider the local ring $A=R_P$ of $\gX$ at $P$ and denote by $A_D$
its localization at $D$. Since $S_{0,T}$ is finite \'etale of degree prime
to $p$, $H^1(A_D,S_0)$ consists in loop torsors as defined in \cite[\S 2.3, lemma 2.3.(2)]{Gi2},  i.e. those arising from cocycles related to tame Galois covers of $A_D$. It follows that the $A_D$--torsor
$Q_I$ is a loop $\Aut_I(G)$--torsor \cite[lemma 2.3.(3)]{Gi2},
so that $\gG \times_{\gX \setminus D} A_D$ is 
by definition a loop reductive group scheme.

Let $F_{P,v}$ be the completion of the field $F_P$ for the valuation associated to the blow-up of $\Spec(A)$ at its closed point.
Our assumption states in particular that $Z(F_{P,v}) \not = \emptyset$,
that is, $G_{F_{P,v}}$ admits a parabolic subgroup of type $I$.
According to \cite[th. 4.1 , (iii) $\Longrightarrow$ (i)]{Gi2}, $\gG \times_{\gX \setminus D} A_D$
admits a parabolic subgroup of type $I$. 
A fortiori $G_{F_P}$ admits a parabolic subgroup of type 
$I$ so that $Z(F_P) \not = \emptyset$.
 \end{proof}

\begin{sremarks}{\rm 
(a) If $p=0$, the result used \cite[th. 4.1]{Gi2}
admits a simple proof, see \cite[Ex. 4.2]{Gi2},
by using the analogous result over Laurent polynomials
\cite[th. 7.1]{GP2}.

\smallskip

\noindent (b) In nice cases inspection of the proof permits 
to weaken the assumption on $p$. 
The precise condition is that the $\Aut(G_0)$--torsor
$\mathrm{Isom}(G_0,G)$ admits a reduction to a finite
$F$--subgroup whose degree is prime to $p$.
For example in type $G_2$, we need to assume only 
that $p$ is prime to $2$.
}
\end{sremarks}

 Together with Corollary \ref{cor_main_hhk}, we obtain the following consequence:

 \begin{stheorem} \label{thm_patching} Let $G$ be a reductive
 $F$--group and assume that $p$ does not divide the order of the automorphism group of the  absolute root system of $G_{ad}$.
 Let $Z$ be a twisted flag $F$--variety of $G$.
 Then $Z(F) \not = \emptyset$ if and only if $Z(F_v) \not = \emptyset$
 for all discrete valuations of $F$ arising from models of $X$.
 \end{stheorem}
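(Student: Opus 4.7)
The ``only if'' direction is immediate since any $F$--point gives an $F_v$--point by scalar extension. For the ``if'' direction, my plan is to combine the two main tools developed in the previous sections: the patching statement (Corollary \ref{cor_main_hhk}), which reduces the problem to checking rationality over the fields $F_P$ attached to \emph{all} points $P$ of the closed fiber of some regular proper model, and Proposition \ref{prop_patching}, which produces such a model under the hypothesis on $p$.

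More precisely, I would start by applying Proposition \ref{prop_patching} to the reductive group $G$ and the twisted flag variety $Z$. The hypothesis that $p$ does not divide the order of the automorphism group of the absolute root system of $G_{ad}$ is exactly what is needed there, so the proposition produces a regular proper model $\gX$ of $X$ with closed fiber $Y=\gX_k$ such that $Z(F_y)\neq \emptyset$ for every point $y \in Y$ (including the generic points of components and the closed points). Note that this step is where the $p$--hypothesis is genuinely used, through the loop torsor machinery of \cite{Gi2} applied at each closed point of $Y$.

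Once such a model $\gX$ is in hand, I would apply Corollary \ref{cor_main_hhk}: since $Z(F_P) \neq \emptyset$ for every $P \in Y$, the corollary yields $Z(F) \neq \emptyset$, which is exactly the desired conclusion. I do not see any further obstacle here, because Corollary \ref{cor_main_hhk} is stated precisely in the form needed and its hypotheses are verified by the output of Proposition \ref{prop_patching}.

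The main conceptual obstacle has therefore already been handled inside Proposition \ref{prop_patching} itself; the present theorem is just the clean packaging of these two results. If one wanted a single-line argument, one would simply write: ``by Proposition \ref{prop_patching}, the hypothesis on local points at all discrete valuations coming from models implies the existence of a regular proper model $\gX$ of $X$ whose special fiber $Y$ satisfies $Z(F_y)\neq \emptyset$ for every $y \in Y$; Corollary \ref{cor_main_hhk} then gives $Z(F)\neq \emptyset$.''
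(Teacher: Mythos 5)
Your proposal is correct and is exactly the paper's argument: the authors state the theorem as an immediate consequence of Proposition \ref{prop_patching} combined with Corollary \ref{cor_main_hhk}, precisely the two-step reduction you describe.
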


 \section{Appendix: characterization of parabolic subgroups.} \label{app_parabolic}
 Let $G$ be a reductive $F$--group 
over a field $F$. We remind the reader 
that an algebraic $F$--subgroup $P$ of $G$ is parabolic
if $P$ is smooth and  $G/P$  is a projective $F$--variety.
The interest of the  probably known statement below
is only in positive characteristic since in this case
there  exist $F$--subgroups $Q$ which are not smooth such that 
 $G/Q$ is a projective $F$--variety \cite{W}. 
  
\begin{sproposition}\label{prop_parabolic} 
Let $P$ be a $k$--subgroup of $G$
such that the quotient variety $G/P$ is projective.
Then the following assertions are equivalent: 

\sm

(i) $P$ is an $F$--parabolic subgroup

\sm

(ii) For each $F$--field $E$, $G(E)$ acts transitively
on $(G/P)(E)$

\sm

(iii) The quotient map $G \to G/P$ admits a rational section
 
\sm

(iv) $P$ is smooth connected

\sm

(v) $P$ is smooth.

\end{sproposition}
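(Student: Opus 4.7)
The equivalence $(i) \Leftrightarrow (v)$ is immediate from the definition of parabolic recalled above (smooth $F$-subgroup with projective quotient) together with the standing hypothesis $G/P$ projective; $(iv) \Rightarrow (v)$ is trivial. I will establish the remaining equivalences via the cycle $(v) \Rightarrow (iv) \Rightarrow (iii) \Rightarrow (v)$ and the loop $(iv) \Rightarrow (ii) \Rightarrow (iii)$.

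For $(v) \Rightarrow (iv)$, I take the neutral component $P^0 \subset P$: since $P$ is smooth, $P^0$ is smooth and connected, and the projection $G/P^0 \to G/P$ is finite \'etale, so $G/P^0$ is projective. Thus $P^0$ is a classical parabolic of $G$, and in particular $N_G(P^0) = P^0$. Since $P$ normalizes $P^0$, this forces $P \subset P^0$, hence $P = P^0$. The implications $(iv) \Rightarrow (iii)$ and $(iv) \Rightarrow (ii)$ are standard for parabolic subgroups. For $(iii)$: choose an $F$-parabolic $P^-$ opposite to $P$, whose existence follows from the representability (SGA 3 XXVI.4.3.2) of the scheme of opposites by an $R_u(P)$-torsor together with the vanishing $H^1(F, R_u(P)) = 0$ for the split unipotent $R_u(P)$. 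Then $R_u(P^-) \times P \to G$ is an open immersion, so $R_u(P^-) \hookrightarrow G \to G/P$ is an open immersion onto the big cell and inverts to a rational section of $\pi$. For $(ii)$: by the classical conjugacy theorem for parabolic subgroups (Borel--Tits, SGA 3 XXVI), two parabolics of the same type in $G_E$ are always $G(E)$-conjugate, which is exactly the transitivity of the $G(E)$-action on $(G/P)(E)$.

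For $(ii) \Rightarrow (iii)$, I apply $(ii)$ with $E = F(G/P)$. The identity morphism on $G/P$ restricts to the generic point $\eta \in (G/P)(E)$, and the base point $eP \in (G/P)(F) \subset (G/P)(E)$ provides a second $E$-point. By transitivity there exists $g \in G(E) = G(F(G/P))$ with $\pi(g) = \eta$, and $g$ defines a rational section $G/P \dashrightarrow G$ of $\pi$. For $(iii) \Rightarrow (v)$, let $s : U \to G$ be a rational section on a dense open $U \subset G/P$. The $P$-equivariant morphism
$$
\phi : P \times_F U \lgr \pi^{-1}(U), \qquad (p,u) \lmp p \cdot s(u),
$$
is an isomorphism of (trivialized) $P$-torsors over $U$. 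Since $\pi^{-1}(U)$ is open in the smooth $F$-scheme $G$, it is smooth over $F$, and therefore so is $P \times_F U$. At any closed geometric point $(p_0, u_0)$, the local ring of $(P \times_F U)_{\bar F}$ is a localization of $\cO_{P_{\bar F}, p_0} \otimes_{\bar F} \cO_{U_{\bar F}, u_0}$; its regularity forces regularity of $\cO_{P_{\bar F}, p_0}$ (quotient by the system of parameters of $\cO_{U_{\bar F}, u_0}$). Regularity at one point of the group scheme $P_{\bar F}$ propagates to all points via translation, so $P_{\bar F}$ is regular and hence smooth over $\bar F$; thus $P$ is smooth over $F$.

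The main obstacle is conceptual rather than computational: it is the implication $(iv) \Rightarrow (ii)$, whose content is the triviality of the kernel of $H^1(E, P) \to H^1(E, G)$ for $P$ parabolic, equivalently the $G(E)$-conjugacy of same-type parabolics in $G_E$. This is what fails for Wenzel-type non-smooth subgroups $Q$ with $G/Q$ projective in positive characteristic, and it is precisely the geometric content that makes the proposition informative.
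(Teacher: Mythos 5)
Your proposal is correct, but it routes the one nontrivial direction differently from the paper. The paper closes the circle $(i)\Rightarrow(ii)\Rightarrow(iii)\Rightarrow(iv)\Rightarrow(v)\Rightarrow(i)$, and its key step is $(iii)\Rightarrow(iv)$ done in one stroke: over $\bar F$ set $Q=(P_{red})^0$, observe that $q:G/Q\to G/P$ is finite, projective, and inherits a rational section from $G\to G/P$, and invoke \cite[cor.~6.1.15]{EGA2} to conclude $q$ is an isomorphism, whence $P=Q$ is smooth \emph{and} connected simultaneously. You instead split this into $(iii)\Rightarrow(v)$ --- the rational section trivializes the $P$-torsor over a dense open $U$, so $P\times_F U\cong\pi^{-1}(U)$ is smooth and smoothness descends to $P$ --- and $(v)\Rightarrow(iv)$, where connectedness comes from $P\subset N_G(P^0)=P^0$ using the scheme-theoretic self-normalization of the parabolic $P^0$ (SGA~3, XXVI.1.2, applicable once you note $G/P^0\to G/P$ is finite \'etale so $P^0$ is parabolic). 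Both routes are sound; yours trades the EGA~II rigidity statement for two other standard inputs (fppf descent of smoothness along the trivialized torsor, and self-normalization of parabolics), and has the mild advantage of isolating exactly where smoothness and where connectedness come from. You also supply a direct proof of $(iv)\Rightarrow(iii)$ via the opposite parabolic and the big cell, which the paper's cyclic scheme renders unnecessary but which is correct. Two cosmetic points: in the trivialization $\phi$ the formula should read $(p,u)\mapsto s(u)\cdot p$ for the quotient by right translation (as written, $\pi(p\cdot s(u))\neq u$ in general); and your closing remark misattributes the ``main obstacle'' --- the conjugacy of same-type parabolics $(iv)\Rightarrow(ii)$ is classical, whereas the genuinely delicate content of the proposition (and what fails for Wenzel's non-smooth $Q$) is the converse passage from a rational section back to smoothness and connectedness.
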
 
 
 \begin{proof}
\noindent  $(i) \Longrightarrow (ii)$.
Since $(G/P)(E)$ parameterizes the $E$--parabolic
subgroups of $G_E$ of same type that $P$,
 Borel-Tits' conjugacy theorem \cite[th. 4.13.c]{BoT65}
 shows that $G(E)$ acts transitively on $(G/P)(E)$.

\sm

\noindent  $(ii) \Longrightarrow (iii)$.
Our assumption rephrases by saying that
the map $G(E) \to (G/P)(E)$ is onto for each $F$--field $E$.
Applying that to the function field $E=F(G/P)$
of the smooth connected $F$- variety $G/P$
provides a rational section of the map $G \to G/P$.

\sm

\noindent  $(iii) \Longrightarrow (iv)$.
To show the smoothness of $P$ we can assume
that $F$ is algebraically closed.
Then the neutral component  $Q=(P_{red})^0$
of the reduced $F$--subgroup $P_{red}$ of $P$ is smooth. Furthermore 
the quotient $F$--variety $P/Q$ is finite. It follows that the
morphism $q: G/Q \to G/P$ is finite and  a fortiori projective
\cite[Tag 0B3I]{St}. Since the composition of projective morphisms (of qcqs schemes) is projective  \cite[Tag 0C4P]{St}, it follows that 
$G/Q$ is projective. The $F$--subgroup $Q$ of $G$
is then parabolic. Our assumption is that the morphism
$G \to G/P$ has a rational section
 and so 
has a fortiori the finite morphism $q: G/Q \to G/P$.
According to \cite[cor. 6.1.15]{EGA2}, $q$ is  an isomorphism. Thus $Q=P$ and we conclude that $P$ is smooth connected.

\sm

\noindent  $(iv) \Longrightarrow (v)$. Obvious.

\sm

\noindent $(v) \Longrightarrow (i)$.
This is by definition.
\end{proof}

 A variant is the following.

\begin{sproposition}\label{prop_homog} 
Let $X$ be a smooth  projective $G$--variety.
Then the following statements are equivalent: 

\sm

(i) $X$ is a $F$--variety of parabolic subgroups of $G$

\sm

(ii) For each $F$--field $E$, $G(E)$ acts transitively
on $X(E)$.

\end{sproposition}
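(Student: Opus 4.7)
For the easy direction $(i) \Rightarrow (ii)$, I would argue as in the corresponding implication of Proposition \ref{prop_parabolic}: when $X = \Par_I(G)$, the set $X(E)$ parameterizes parabolic $E$-subgroups of $G_E$ of type $I$, and Borel--Tits conjugacy \cite[th.\ 4.13.c]{BoT65} ensures transitivity of the $G(E)$-action.

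For $(ii) \Rightarrow (i)$, the plan is to produce a $G_{F_s}$-isomorphism $X_{F_s} \simeq G_{F_s}/H$ with $H$ parabolic and then invoke the paper's definition of a twisted flag variety. Since $X$ is smooth and geometrically integral, $X(F_s) \neq \emptyset$; fix $x \in X(F_s)$ and let $H \subset G_{F_s}$ denote its scheme-theoretic stabilizer. As $H$ is a closed subgroup scheme over the field $F_s$, it is flat, so the quotient $G_{F_s}/H$ is representable and the orbit morphism factors through a locally closed immersion $G_{F_s}/H \hookrightarrow X_{F_s}$ identifying $G_{F_s}/H$ with the orbit of $x$.

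To see that this orbit exhausts $X_{F_s}$, the hypothesis applied to the $F$-field $F_s$ places every $F_s$-point of $X_{F_s}$ in the orbit; as $X$ is smooth, these points are Zariski dense in $X_{F_s}$, so the orbit is open. Its complement $Z$ is then a closed $G$-stable subscheme, and base-changing to an algebraic closure $\overline F$ and invoking the hypothesis for the $F$-field $\overline F$ gives transitivity of $G(\overline F)$ on $X(\overline F)$; thus $Z_{\overline F}$ has no $\overline F$-points and is empty, forcing $Z = \emptyset$ and $G_{F_s}/H \xrightarrow{\sim} X_{F_s}$ as $G_{F_s}$-schemes.

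At this point I would apply Proposition \ref{prop_parabolic} over $F_s$ to the pair $(G_{F_s}, H)$: the quotient $G_{F_s}/H \simeq X_{F_s}$ is projective, and transitivity of $G(E)$ on $(G_{F_s}/H)(E) = X(E)$ for every $F_s$-field $E$ (automatically an $F$-field) is exactly the hypothesis. The implication $(ii) \Rightarrow (i)$ of that proposition then forces $H$ to be a parabolic $F_s$-subgroup of some type $I$, so $X_{F_s} \simeq G_{F_s}/P_I$; by the paper's definition $X$ is a twisted $F$-flag variety of $G$, and \cite[prop.\ 1.3]{MPW} (cited in the conventions) upgrades this to a $G$-equivariant $F$-isomorphism $X \simeq \Par_I(G)$. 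The step I expect to require the most care is the scheme-theoretic identification $G_{F_s}/H \simeq X_{F_s}$, because $H$ may fail to be smooth in positive characteristic (Wenzel's examples \cite{W}) so naive smoothness arguments for the quotient do not apply; the orbit-as-immersion structure together with base change to $\overline F$ is the natural workaround, after which Proposition \ref{prop_parabolic} finishes the argument cleanly.
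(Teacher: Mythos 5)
Your proposal is correct and follows essentially the same route as the paper: pass to $F_s$ (justified by \cite[prop.\ 1.3]{MPW}), identify $X_{F_s}$ with $G_{F_s}/H$ for $H$ the stabilizer of a rational point, and conclude via Proposition \ref{prop_parabolic}, $(ii)\Rightarrow(i)$. The only difference is that you verify the orbit map is an isomorphism by hand (density of $F_s$-points plus emptiness of the complement over $\overline F$), where the paper simply cites \cite[prop.\ III.3.2.1]{DG}.
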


\begin{proof}
The implication $(i) \Longrightarrow (ii)$ is again
Borel-Tits' conjugacy theorem.
We assume (ii). According to \cite[prop. 1.3]{MPW},
we can assume that $F$ is separably closed.
Since $X$ is smooth, we have $X(F) \not = \emptyset$ 
and denote by  $P$ the stabilizer of some $F$--point $x$.
According to \cite[prop. III.3.2.1]{DG}.
condition (ii) implies that the orbit map $G \to X$, $g \mapsto g.x$
induces an isomorphism $f_x: G/P \simlgr X$.
Proposition \ref{prop_parabolic},  
$(ii) \Longrightarrow (i)$, shows that 
$P$ is a $F$--parabolic subgroup.
Thus $X$ is a $F$--variety of parabolic subgroups of $G$.
\end{proof}

 \begin{sremark} {\rm
 The condition (ii) is called
{\it transitive action of $G$ on $X$}
by Harbater-Hartmann-Krashen.
It   occurs in \cite[th. 3.7]{HHK}. 
Projective homogeneous varieties in the result quoted above are exactly
the various varieties of parabolic subgroups.
 }
 \end{sremark}

 \section{Appendix: extending torsors} \label{app_extend}
 
 We come back to a  purity result of Colliot-Th\'el\`ene and Sansuc.
 
\newpage

 \begin{stheorem}\label{thm_extend}
 Let $X$ be a regular scheme of dimension $2$.
 Let $U$ be an open subcheme of $X$ which contains  $X^{(1)}$. 
 Let $G$ be an affine $X$--group scheme. In the following  cases
 
 \smallskip
 
 (i) $G$ is reductive,
 
 \smallskip
 
 (ii) There exists a  Zariski cover $(U_i)_{i \in I}$ of $X$ 
 such that each $G_{U_i}$ admits a closed embedding $G_{U_i} \subset \GL_{n_i}$ such that 
 $\GL_{n_i}/G_{U_i}$ is representable by an affine $U_i$--scheme,
 
  \smallskip
  
then we have the equality $H^1_{\fppf}(X,G) \simlgr H^1_{\fppf}(U,G)$.

 \end{stheorem}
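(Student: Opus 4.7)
The plan is to reduce everything to two classical ingredients: (a) Hartogs' principle for affine morphisms over a regular $2$-dimensional scheme (sections over $U$ of an affine $X$-scheme extend uniquely to $X$ when $\mathrm{codim}(X \setminus U, X) \geq 2$), and (b) classical purity for vector bundles, i.e.\ $H^1_{\Zar}(X, \GL_n) = H^1_{\Zar}(U, \GL_n)$ on a regular scheme.

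First I would reduce case (i) to case (ii): if $G$ is reductive over $X$, it is well known (Thomason / Matsushima-type results, proved in this level of generality in SGA3) that Zariski-locally on $X$ there exists a closed immersion $G \hookrightarrow \GL_n$ with $\GL_n/G$ representable by an affine scheme. So we are in the situation of (ii). Next, using the Zariski cover $(U_i)_{i \in I}$ in (ii) together with a Mayer--Vietoris argument on $G$-torsors (which requires only that both sides are sheaves in the Zariski topology, hence reduces the claim to pairs of opens intersecting in an open containing $X^{(1)} \cap V_1 \cap V_2$), we may assume there is a \emph{global} closed immersion $G \hookrightarrow \GL_n$ with $Y := \GL_n/G$ affine over $X$.

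Now I would write the comparison of pointed exact sequences coming from $1 \to G \to \GL_n \to Y \to 1$ on the fppf site:
\[
\begin{CD}
\GL_n(X) @>>> Y(X) @>>> H^1_{\fppf}(X,G) @>>> H^1_{\fppf}(X,\GL_n) \\
@VVV @VVV @VVV @VVV \\
\GL_n(U) @>>> Y(U) @>>> H^1_{\fppf}(U,G) @>>> H^1_{\fppf}(U,\GL_n)
\end{CD}
\]
The first two vertical maps are bijections by Hartogs (both $\GL_n$ and $Y$ are affine over $X$, and $X \setminus U$ has codimension $\geq 2$ in the regular surface $X$). The rightmost vertical map is a bijection by purity for vector bundles. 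A diagram chase on pointed sets then gives injectivity of the middle arrow essentially for free.

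For surjectivity one needs a twisting argument, which I expect to be the main technical point. Given $\xi \in H^1_{\fppf}(U,G)$, form the induced $\GL_n$-torsor $P = \xi \wedge^G \GL_n$ on $U$; by purity it extends to a $\GL_n$-torsor $\widetilde P$ on $X$. Inner twisting of the exact sequence by $\widetilde P$ yields an exact sequence $1 \to {}^{\widetilde P}G \to {}^{\widetilde P}\GL_n \to Y^{\widetilde P} \to 1$ in which $Y^{\widetilde P}$ is again affine over $X$ (this is the point where one must check the affineness is preserved by inner twisting by a $\GL_n$-torsor, which follows from fppf descent). The reduction of structure from $\GL_n$ to $G$ provided by $\xi$ on $U$ corresponds to a section of $Y^{\widetilde P}$ over $U$; by Hartogs this section extends to $X$, and the extended section produces a $G$-torsor on $X$ restricting to $\xi$. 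The main obstacle I anticipate is verifying cleanly that the twisting $Y^{\widetilde P}$ remains affine of finite presentation over $X$ and that one is entitled to run the torsor-to-section dictionary on the non-trivial form; once this is in place, Hartogs plus purity for $\GL_n$ assemble the proof mechanically.
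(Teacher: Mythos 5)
Your argument is correct and is essentially the paper's: the paper simply cites Colliot-Th\'el\`ene--Sansuc \cite[th.\ 6.13]{CTS} for case (i) and says case (ii) follows by inspection of that proof, and what you have written out (closed embedding $G\hookrightarrow \GL_n$ with affine quotient, Hartogs/$S_2$ for sections of affine $X$-schemes over opens containing $X^{(1)}$, purity for vector bundles, and the twisting argument for surjectivity) is precisely that proof. The only point deserving more care is the Zariski gluing step in case (ii), where gluing the local extensions requires uniqueness of extensions of torsor isomorphisms, itself a Hartogs statement for the (affine) automorphism group schemes --- but this is exactly the ``inspection'' the paper alludes to.
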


 \begin{proof}
  The case (i) is \cite[th. 6.13]{CTS}. The case (ii) goes by inspection of 
  the proof. 
 \end{proof}

 \bigskip

\bigskip

\medskip

\end{document}